\definecolor{darkred}{rgb}{0.5,0.2,0.2}
\newtheorem{theorem}{Theorem}[section]
\newtheorem{example}[theorem]{Example}
\newtheorem{lemma}{Lemma}[section]
\newtheorem{proposition}{Proposition}[section]
\newtheorem{remark}{Remark}
\newenvironment{proof}[1][Proof]{\textbf{#1.} }{\ \rule{0.5em}{0.5em}}
\def\BSS{{\mathcal BSS}}
\def\R{{\mathbb R}}
\def\N{{\mathbb N}}
\newcommand\mikko[1]{\textcolor{black}{#1}}
\newcommand\mikkel[1]{\textcolor{black}{#1}}
\begin{document}

\title{The Local Fractional Bootstrap\footnote{We would like to thank Solveig S\o rensen for competent and precise proof reading of the manuscript. Our research has been supported by CREATES (DNRF78), funded by the Danish National Research Foundation, 
by Aarhus University Research Foundation (project ``Stochastic and Econometric Analysis of Commodity Markets"), and
by the Academy of Finland (project 258042).}}
\author{Mikkel Bennedsen\thanks{
Department of Economics and Business Economics and CREATES, Aarhus
University, Fuglesangs All\'{e} 4, 8210 Aarhus V, Denmark. E-mail:\ \href{mailto:mbennedsen@econ.au.dk}%
{\nolinkurl{mbennedsen@econ.au.dk}}} \and
Ulrich Hounyo\thanks{
Department of Economics and Business Economics and CREATES, Aarhus
University, Fuglesangs All\'{e} 4, 8210 Aarhus V, Denmark. E-mail:\ \href{mailto:uhounyo@econ.au.dk}%
{\nolinkurl{uhounyo@econ.au.dk}} } \and Asger Lunde\thanks{
Department of Economics and Business Economics and CREATES, Aarhus
University, Fuglesangs All\'{e} 4, 8210 Aarhus V, Denmark. E-mail:\ \href{mailto:alunde@econ.au.dk}%
{\nolinkurl{alunde@econ.au.dk}} } \and Mikko S. Pakkanen\thanks{
Department of Mathematics, Imperial College London, South Kensington Campus,
London SW7 2AZ, UK and CREATES, Aarhus University\mikko{, Denmark}. E-mail:\ \href{mailto:m.pakkanen@imperial.ac.uk}%
{\nolinkurl{m.pakkanen@imperial.ac.uk}} }}
\maketitle

\setlength{\baselineskip}{18pt}\setlength{\abovedisplayskip}{10pt} %
\belowdisplayskip\abovedisplayskip\setlength{\abovedisplayshortskip }{5pt} %
\abovedisplayshortskip\belowdisplayshortskip\setlength{%
\abovedisplayskip}{8pt} \belowdisplayskip\abovedisplayskip%
\setlength{\abovedisplayshortskip }{4pt}

\begin{abstract}
We \mikko{introduce a bootstrap procedure for high-frequency statistics of Brownian semistationary
processes. More specifically, we focus on a hypothesis test on the
roughness of sample paths of Brownian semistationary processes, which uses an estimator based on a ratio of realized power variations. Our new
resampling method, the local fractional bootstrap, relies on
simulating an auxiliary fractional Brownian motion that mimics the
fine properties of high frequency differences of the Brownian semistationary process under
the null hypothesis. We prove the first order validity of the bootstrap method and in simulations we
observe that the bootstrap-based hypothesis test provides considerable finite-sample improvements over an existing test that is based on a central limit theorem.} This is important when studying the roughness properties of time series data; we illustrate this by applying the bootstrap method to two empirical \mikko{data sets}: \mikko{we assess the roughness of a time series of high-frequency asset prices and we test the validity of Kolmogorov's scaling law in atmospheric turbulence data.}
\end{abstract}

\noindent \textbf{Keywords}:  Brownian semistationary process; roughness; fractal index; \mikko{H\"older regularity}; \mikko{fractional Brownian motion}; \mikko{bootstrap}; stochastic volatility; turbulence.

\vspace*{1em}

\noindent {\bf JEL Classification}: C12, C22, C63, G12

\vspace*{1em}

\noindent {\bf MSC 2010 Classification}: 60G10, 60G15, 60G17, 60G22, 62M07, 62M09, 65C05

\section{Introduction}
In the study of pathwise properties of continuous \mikko{stochastic processes}, \emph{roughness} is a central \mikko{attribute}. \mikko{Theoretically}, roughness relates to the degree of H\"older \mikko{regularity} enjoyed by the \mikko{sample paths of the stochastic process in} question. \mikko{The fractional Brownian motion (fBm), introduced by \citet{Kolmogorov1940} and popularized later by \citet{MVN68}, is perhaps the most well-known example of a process that can exhibit various degrees of roughness. The fBm with Hurst index $H \in (0,1)$ is a Gaussian process that coincides with the standard Brownian motion when $H = 1/2$. If $H < 1/2$ (respectively $H >1/2$), then the sample paths of the fBm are rougher (respectively smoother) than those of the standard Brownian motion in terms of H\"older regularity.}
In this work, we are concerned with conducting inference on the \emph{fractal index}, $\alpha$, of a \mikko{stochastic process}, when $\alpha$ is estimated using the so-called \emph{change-of-frequency} (COF) estimator, \mikko{introduced by \cite{LR2001} for Gaussian processes and extended by \cite{BNCP12} and \cite{CHPP13} to a non-Gaussian setting. In the case of the fBm} it holds that $\alpha = H - 1/2$, \mikko{whilst in general $\alpha < 0$ indicates roughness and $\alpha > 0$ smoothness relative to the standard Brownian motion, as with fBm. When $\alpha = 0$, the stochastic process} under consideration has the same roughness as the \mikko{standard} Brownian motion. 

Several interesting \mikko{empirical time} series exhibit signs of roughness, i.e. $%
\alpha <0$. \mikko{Some noteworthy examples include:
\begin{itemize}
\item time-wise measurements of velocity in turbulent flows \citep{CHPP13}, where roughness in inertial time scales is predicted by Kolmogorov's scaling law \citep{kolmogorov41} and Taylor's \emph{frozen field hypothesis} \citep{Taylor1938},
\item time series of electricity spot prices \citep{BNBV13,bennedsen15},
\item measures of the realized volatility of asset prices \citep{GJR14,BLP16}.
\end{itemize}}
In these
applications, estimation of, and inference on, the index $\alpha$
is important. 
\mikko{There is a long history of methods of estimating $\alpha$, concentrating mostly on Gaussian processes, of which a comprehensive survey is provided in \citet{GSP12}.
In the time series data mentioned above, non-Gaussian features are pervasive, however, which is why we concentrate on a specific, yet flexible, non-Gaussian framework. \emph{Brownian semistationary} ($\BSS$) processes \citep{BNSc07,BNSc09} form a class of stochastic processes that accommodate various departures from Gaussianity and different degrees of roughness. \cite{BNCP12} and \cite{CHPP13} and have studied the properties of the COF estimator of $\alpha$ in the context of $\BSS$ processes. In particular, they have derived a central limit theorem (CLT), that makes it possible to conduct hypothesis tests on $\alpha$.}


In the present \mikko{paper,} our main focus will be on the COF estimator of \mikko{a driftless $\BSS$ process $(X_t)_{t \in \R}$}, given by
\begin{align*}
X_t = \int_{-\infty}^t g(t-s) \sigma_s d W_s,
\end{align*}
where \mikko{$g : \R^+ \rightarrow \R$} is a kernel function, \mikko{$(\sigma_t)_{t \in \R}$} a stochastic volatility process, and \mikko{$(W_t)_{t \in \R}$ a standard} Brownian motion. Important for our present purpose, we assume that the kernel function \mikko{$g(x)$} behaves \mikko{like} a power function, $x \mapsto x^{\alpha}$, when \mikko{$x$ is near zero}; a statement we will make precise below. When this assumption holds, and some additional (mild) technical conditions are met, the sample paths of $X$ are H\"older continuous with index $\alpha+1/2-\varepsilon$ for any $\varepsilon>0$ \citep[Proposition 2.1]{BLP15}. Moreover, $\alpha=0$ \mikkel{is a necessary condition for the process to be a semimartingale \citep{basse08,bennedsen16a}}. Intuitively, the $\BSS$ process is a moving average process driven by volatility-modulated Brownian noise and is, thus, quite general and flexible. 
\mikko{The $\BSS$ framework is also closely related to processes such as the fBm and Gaussian Volterra processes of convolution type; see e.g. \cite{BLP15}.} Therefore we expect the methods proposed in this paper to apply to these processes as well, but research into the specifics is \mikko{beyond} the scope of the present paper and left for future work.

The contribution of this paper is to derive a bootstrap procedure that \mikko{improves the
finite sample properties of the test of the null hypothesis
\begin{equation*}
H_{0}:\alpha
=\alpha _{0},
\end{equation*}
for some $\alpha _{0}\in \left( -\frac{1}{2},\frac{1}{2}%
\right)$, when the fractal index  $\alpha$ is estimated using the COF estimator.} 
Theoretically, the COF estimator \mikko{has} two regimes: \mikko{in the first regime} $%
\alpha \in \left( -\frac{1}{2},\frac{1}{4}\right)$, the estimator uses the
entire sample to estimate $\alpha$. In this case, we propose a novel
bootstrap method, the \emph{local fractional bootstrap}, which utilizes
simulations of an auxiliary fractional Brownian motion with \mikko{Hurst index $H=\alpha _{0}+\frac{1}{2}
$,} 
thereby mimicking the \mikko{fine properties} of the \mikko{sample paths} of the underlying $\BSS$ process
under \mikko{$H_0$}. We establish \mikko{the} first-order asymptotic validity of the local
fractional bootstrap for the percentile-$t$ methods, i.e. when the
test-statistic is normalized by its (bootstrap) standard deviation.

As noted in \mikko{\citet[Section 4]{CHPP13}, in the second regime} $\alpha \in \left[ \frac{1}{4},\frac{1}{2}\right) $, \mikko{the asymptotic behavior of the COF estimator is potentially affected (depending on the form of $g$) by a non-negligible bias term, causing the CLT to break down.} For this reason, the authors suggest using a modified COF estimator \mikko{that implements asymptotically increasing gaps between increments from which the power variations are computed.} These gaps make the increments used in the estimator asymptotically uncorrelated, which \mikko{opens the door to} a \emph{wild bootstrap}  approach \citep[e.g.][]{wu86,liu88,{GM09}} in this regime. However, since the local fractional bootstrap method proposed in this paper works even when $\alpha \in \left[ \frac{1}{4},\frac{1}{2}\right)$, we, for the sake of brevity, relegate the details on the wild bootstrap method, along with the simulation study of its finite sample properties, to a separate web appendix \citep{BHLP16}.


In a Monte Carlo simulation study, we \mikko{assess} the finite sample properties of the local fractional bootstrap  \mikko{procedure in comparison with the inference method based on the} CLT of \cite{CHPP13}. We find that \mikko{for all $\alpha \in \left(-\frac{1}{2},\frac{1}{2} \right)$}, the local fractional bootstrap offers improvements \mikko{in terms of} the size of the test of $H_0$, especially when the sample size \mikko{ranges from} small to
moderate. Indeed, since our method simulates the auxiliary bootstrap observations under \mikko{$H_0$}, we minimize the probability of a type I error \citep{DM99}, i.e. of rejecting $H_0$ when \mikko{it actually holds}.  This \mikko{feature} proves to be important when we in the empirical section apply the \mikko{method} to assess the roughness of \mikko{the} time series of \mikko{logarithmic} prices of the E-mini futures contract. In this case, no-arbitrage considerations \mikko{suggest that} $\alpha = 0$, and we find that the bootstrap procedure is crucial for achieving the correct size of the test of $H_0: \alpha = 0$ when applied to intraday price series. We also apply the bootstrap method to a \mikko{time series of measurements of atmospheric turbulence} to test for the \mikko{empirical validity of} \mikko{Kolmogorov's scaling law} \citep{kolmogorov41}. We find \mikko{the data to be in good agreement} with the scaling law, \mikko{but again using the bootstrap is crucial for accurate inference when the sample size is small.}

The rest of this paper is structured as follows. Section \ref%
{sec:setupreview} sets the stage by presenting the mathematical definition
of the $\BSS$ process as well as the assumptions we work under. This
section also briefly reviews existing results as they pertain to the present
work. In Section \ref{sec:fracBoot} we detail our bootstrap method, the
local fractional bootstrap, and give the details \mikko{on} its implementation. Section \ref%
{sec:MC} contains a Monte Carlo study of the finite sample properties of the bootstrap method and Section %
\ref{sec:empirical} presents the empirical applications. Section \ref%
{sec:concl} concludes. Simulation setup, proofs, as well as some additional technical derivations, are
given in \mikko{Appendices \ref{sec:sim}, \ref{app:derive}, and \ref{sec:proofs}.} \mikko{The details} on the wild bootstrap method, including proofs and a simulation \mikko{experiment}, are available in a web appendix \citep{BHLP16}.

\section{Setup, assumptions, and review of existing results}
\label{sec:setupreview}

\mikko{Now, we introduce some essential notation. Following the conventions of} bootstrap literature,
$\mathbb{P}^{\ast }$ ($\mathbb{E}^{\ast }$ and $Var^{\ast }$) denotes the
probability measure (expected value and variance) induced by the bootstrap
resampling, conditional on a realization of the original time series. In
addition, for a sequence of bootstrap statistics $Z_{n}^{\ast }$, we write $%
Z_{n}^{\ast }=o_{p^{\ast }}\left( 1\right) $ in probability, or $Z_{n}^{\ast
}\overset{\mathbb{P}^{\ast }}{\rightarrow }0$, as $n\rightarrow \infty $, in
probability, if for any $\varepsilon >0$, $\delta >0$, 
\begin{align*}
\lim_{n\rightarrow
\infty }\mathbb{P}\left[ \mathbb{P}^{\ast }\left( \left\vert Z_{n}^{\ast
}\right\vert >\delta \right) >\varepsilon \right] =0. 
\end{align*}
Similarly, we write $%
Z_{n}^{\ast }=O_{p^{\ast }}\left( 1\right) $ as $n\rightarrow \infty $, in
probability if for all $\varepsilon >0$ there exists $M_{\varepsilon
}<\infty $ such that 
\begin{align*}
\lim_{n\rightarrow \infty }\mathbb{P}\left[ \mathbb{P}%
^{\ast }\left( \left\vert Z_{n}^{\ast }\right\vert >M_{\varepsilon }\right)
>\varepsilon \right] =0. 
\end{align*}
Finally, we write $Z_{n}^{\ast }\overset{d^{\ast }}%
{\rightarrow }Z$ as $n\rightarrow \infty $, in probability, if conditional
on the sample, $Z_{n}^{\ast }$ converges weakly to $Z$ under $\mathbb{P}%
^{\ast },$ for all samples contained in a set with $\mathbb{P}$-probability 
converging to one.

\subsection{$\BSS$ setup and assumptions}

\label{sec:setup}

We follow \cite{BNBV13} and consider a filtered
probability space $\left( \Omega ,\mathcal{F},\left( \mathcal{F}_{t}\right)
_{t\in \mathbb{%
\mathbb{R}
}},\mathbb{P}\right) $, on which we define a Brownian semistationary ($\BSS$)%
\textit{\ }process\textit{\ }$X=\left( X_{t}\right) _{t\in \mathbb{%
\mathbb{R}
}}$, without a drift as%
\begin{equation}
X_{t}=\int_{-\infty }^{t}g\left( t-s\right) \sigma _{s}dW_s, \quad t \in \R,
\label{one}
\end{equation}%
where \mikko{$(W_t)_{t \in \R}$} is a \mikko{two-sided standard} Brownian motion, $g:\mathbb{R}^{+}\rightarrow \mathbb{R}$ is a deterministic weight
function satisfying $g\in L^{2}\left( \mathbb{%
\mathbb{R}
}^{+}\right) $, and \mikko{$(\sigma_t)_{t \in \R}$} is an $\left( \mathcal{F}_{t}\right) _{t\in
\mathbb{%
\mathbb{R}
}}$-adapted c\`{a}dl\`{a}g process. We assume that
\begin{equation*}
\int_{-\infty }^{t}g^{2}\left( t-s\right) \sigma _{s}^{2}ds<\infty \text{
a.s.}, \quad \mikko{\textrm{for all $t \in \R$}},
\end{equation*}%
to ensure that \mikko{$X_{t}$ is a.s.\ finite} for any $t\in \mathbb{%
\mathbb{R}
}$. We introduce a centered stationary Gaussian process $G=\left(
G_{t}\right) _{t\in \mathbb{%
\mathbb{R}
}}$ that is associated to $X$, which we will call the \emph{Gaussian core} of $X$,
as%
\begin{equation}
G_{t}=\int_{-\infty }^{t}g\left( t-s\right) dW_s ,\quad t\in
\mathbb{%
\mathbb{R}
}\text{.}  \label{Gaussian}
\end{equation}%
The correlation kernel $r$ of $G$ is given via%
\begin{equation*}
r\left( t\right) =corr\left( G_{s},G_{s+t}\right) =\frac{\int_{0}^{\infty
}g\left( u\right) g\left( u+t\right) du}{\left\Vert g\right\Vert
_{L^{2}\left( \mathbb{%
\mathbb{R}
}^{+}\right) }^{2}},\quad t\geq 0.
\end{equation*}%
A crucial \mikko{object in} the asymptotic theory is the variogram $R$, given
by%
\begin{equation*}
R\left( t\right) =\mathbb{E}\left[ \left( G_{s+t}-G_{s}\right) ^{2}\right]
=2\left\Vert g\right\Vert _{L^{2}\left( \mathbb{%
\mathbb{R}
}^{+}\right) }^{2}\left( 1-r\left( t\right) \right), \quad t \geq 0.
\end{equation*}%
We assume that the process $X$ is observed at equidistant time points $%
t_{i}=i\Delta _{n},$ $i=0,1,\ldots ,\left\lfloor t/\Delta _{n}\right\rfloor $%
, with $\Delta_n \downarrow 0$ as $n\rightarrow \infty$. This kind of asymptotics is termed \emph{in-fill asymptotics}. The theory considered in this paper will call for computing second order differences of the $\BSS$ process using different lag spacing, $\upsilon \in \N$. In particular, we are concerned with power variations of the following type
\begin{equation}
V\left( X;p,\upsilon\right)^n _{t}\equiv
\sum\limits_{i=2 \upsilon}^{\left\lfloor t/\Delta _{n}\right\rfloor
}\left\vert X_{i\Delta_n} - 2 X_{(i-\upsilon)\Delta_n} +  X_{(i-2\upsilon)\Delta_n} \right\vert ^{p},  \label{Power-Var}
\end{equation}%
where \mikko{$p\geq 1$} and where we refer to $\upsilon$ as the lag between observations. Although the theory goes through for general $\upsilon \in \N$ we will mainly consider $\upsilon = 1, 2$, which will be sufficient for our purposes. For the asymptotic theory,  \cite{CHPP13} also introduce the normalized power variations:
\begin{equation}
\bar{V}\left( X;p,\upsilon\right)^n _{t}\equiv \Delta _{n}\tau
_{n}\left( \upsilon \right) ^{-p}V\left( X;p,\upsilon \right)^n_{t},  \label{Power-Var-Bar}
\end{equation}%
where $\tau _{n}\left( \upsilon\right) =\sqrt{\mathbb{E}\left[
\left\vert G_{i\Delta_n} - 2 G_{(i-\upsilon)\Delta_n} +  G_{(i-2\upsilon)\Delta_n}\right\vert ^{2}\right] }$ is the standard deviation of the second order increment of the Gaussian core calculated with lag spacing $\upsilon\Delta_n$. 

Our proposal is to use the bootstrap \mikko{to approximate the sampling distributions of} a general class of nonlinear transformations of these
statistics. \mikko{This relates to} the limiting behavior of the \mikko{roughness}
parameter estimator of the $\BSS$ process, \mikko{studied in} 
\cite{CHPP13}. In order to \mikko{recall} the consistency result \mikko{for} \ $%
\bar{V}\left( X;p,\upsilon \right)^n_{t}$, derived by \cite{CHPP13}, we need to introduce a set of assumptions. Below, $\alpha $ denotes
a number in $\left( -\frac{1}{2},0\right) \cup \left( 0,\frac{1}{2}\right) $ and functions $L_f$ indexed by a mapping $f$, are assumed to be slowly varying at zero, i.e. be such that $\lim_{x\downarrow 0} \frac{L_f(tx)}{L_f(x)} = 1$ for all $t>0$. For a function $f$, $f^{(k)}$ denotes the \mikko{$k$-th} derivative of $f$.

\begin{description}
\item[Assumption 1.]

\item[(i)] $g\left( x\right) =x^{\alpha }L_{g}\left( x\right) $.

\item[(ii)] $g^{\left( 2\right) }\left( x\right) =x^{\alpha -2}L_{g^{\left(
2\right) }}\left( x\right) $ and for any $\epsilon >0,$ we have $g^{\left(
2\right) }\in L^{2}\left( \left( \epsilon ,\infty \right) \right) .$
Furthermore, $\left\vert g^{\left( 2\right) }\right\vert $ is non-increasing
on the interval $\left( a,\infty \right) $ for some $a>0.$

\item[(iii)] For any $t>0$%
\begin{equation*}
F_{t}=\int_{1}^{\infty }\left\vert g^{\left( 2\right) }\left( s\right)
\right\vert ^{2}\sigma _{t-s}^{2}ds<\infty
\end{equation*}%
almost surely. 
\end{description}

The next set of assumptions deals with the variogram $R$.

\begin{description}
\item[Assumption 2.] For the \mikko{roughness parameter} $\alpha $ from Assumption
1, it holds that

\item[(i)] $R\left( x\right) =x^{2\alpha +1}L_{R}\left( x\right) $.

\item[(ii)] $R^{\left( 4\right) }\left( x\right) =x^{2\alpha
-3}L_{R^{\left( 4\right) }}\left( x\right) $.

\item[(iii)] There exists a $b\in \left( 0,1\right) $ such that%
\begin{equation*}
\underset{x\downarrow 0}{\lim \sup }\underset{y\in \left[ x,x^{b}\right] }{%
\sup }\left\vert \frac{L_{R^{\left( 4\right) }}\left( y\right) }{%
L_{R}\left( x\right) }\right\vert <\infty .
\end{equation*}%

Finally, we introduce an assumption on the smoothness of the process $\sigma
.$

\item[Assumption 3.] For any $q>0$, it holds that%
\begin{equation*}
\mathbb{E}\left[ \left\vert \sigma _{t}-\sigma _{s}\right\vert ^{q}\right]
\leq C_{q}\left\vert t-s\right\vert ^{\gamma q}
\end{equation*}%
for some \mikko{$\gamma >1/2$} and $C_{q}>0$.
\end{description}

\begin{remark}
\mikko{The methods and results presented in this paper can be trivially extended to processes of the form
\begin{equation*}
X^\sharp_t = \int_{-\infty}^t \big(g(t-s) - g_0(-s)\big) \sigma_s dW_s, \quad t \geq 0,
\end{equation*}
where $g$ is as before and $g_0 : \R \rightarrow \R$ is a measurable function such that $g(x)=0$ for all $x<0$ and 
\begin{equation*}
 \int_{-\infty}^t \big(g(t-s) - g_0(-s)\big)^2 \sigma^2_s ds< \infty, \quad \textrm{for all $t \geq 0$}.
\end{equation*}
In particular, we note that $X^\sharp_t-X^\sharp_s = X_t - X_s$ for any $t>s \geq 0$, which is why the techniques that rely on the increments of $X$ presented below, apply, mutatis mutandis, to $X^\sharp$ as well.}
\end{remark}

\subsection{Power variation of the $\BSS$ process\emph{\ }and its
asymptotic theory}

Under Assumptions 1 and 2, \citet[][Theorem 3.1 and
equation (4.5)]{CHPP13} show that for $\upsilon \in \mathbb{N}$
\begin{equation}
\bar{V}\left( X;p,\upsilon\right)^n _{t} \overset{u.c.p.}{%
\rightarrow }V\left( X;p\right) _{t}=m_{p}\int_{0}^{t}\left\vert \sigma
_{s}\right\vert ^{p}ds,  \label{BN et al -Consistency}
\end{equation}%
where $m_{p}\equiv \mathbb{E}\left[ \left\vert U\right\vert ^{p}\right] ,$ $%
U\sim N\left( 0,1\right)$, and $\overset{u.c.p.}{\rightarrow }$ denotes uniform convergence in
$\mathbb{P}$-probability on compact sets. \citet[][Theorems 3.2 and
4.5]{CHPP13} also derive a joint asymptotic distribution of the vector $\Delta
_{n}^{-1/2}\left( \bar{V}\left( X;p,1\right)^n_{t}-V\left(
X;p\right)_t ,\bar{V}\left( X;p,2\right)^n_{t}-V\left( X;p\right)_t
\right)$. In particular,
\mikko{under Assumptions 1--3 \citep[Theorem 3.2]{CHPP13}},
\begin{equation}
\Delta _{n}^{-1/2}\left(
\begin{array}{c}
\bar{V}\left( X;p,1\right)^n_{t}-m_{p}\int_{0}^{t}\left\vert
\sigma _{s}\right\vert ^{p}ds \\
\bar{V}\left( X;p,2\right)^n_{t}-m_{p}\int_{0}^{t}\left\vert
\sigma _{s}\right\vert ^{p}ds%
\end{array}%
\right) \overset{st}{\rightarrow }N\left( 0,\Sigma_{p,t}\right) ,
\label{CLT-Joint}
\end{equation}%
where $\overset{st}{\rightarrow }$ denotes stable convergence, and
\begin{equation*}
\Sigma_{p,t}\equiv \Lambda _{p}\int_{0}^{t}\left\vert
\sigma _{s}\right\vert ^{2p}ds,  
\end{equation*}%
with the matrix $\Lambda _{p}=\left( \lambda _{p}^{ij}\right) _{1\leq
i,j\leq 2}$ given by%
\begin{eqnarray*}
\lambda _{p}^{11} &=&\underset{n\rightarrow \infty }{\lim }\Delta
_{n}^{-1}var\left( \bar{V}\left( B^{H};p,1\right)^n_{1}\right)
,\quad \lambda _{p}^{22}=\underset{n\rightarrow \infty }{\lim }\Delta
_{n}^{-1}var\left( \bar{V}\left( B^{H};p,2\right)^n_{1}\right) ,
\notag \\
\lambda _{p}^{12} &=&\underset{n\rightarrow \infty }{\lim }\Delta
_{n}^{-1}cov\left( \bar{V}\left( B^{H};p,1\right)^n_{1},\bar{V}%
\left( B^{H};p,2\right)^n _{1}\right) ,  \label{Lambda-ij}
\end{eqnarray*}%
with $B^{H}$ being a fractional Brownian motion with Hurst parameter $%
H=\alpha +1/2.$\footnote{Expressions for $\lambda_2^{ij}$ can be found in Appendix \ref{app:derive}.} Note that the computation of
the statistic $\bar{V}\left( X;p,\upsilon\right)^n _{t}$
requires knowledge of the factor $\tau _{n}\left( \upsilon \right) $, which is infeasible since it depends, among other things, on
the \mikko{roughness parameter} $\alpha $ of the $\BSS$\textit{\ }process $X$.\footnote{This approach \mikko{can be} made feasible by first estimating the factor $\tau_n(\upsilon)$, see \citet[][Appendix B]{BNPS14}. \mikko{However, this procedure has the shortcoming that the central limit theorem no longer holds.}}
Based on (\ref{BN et al -Consistency}) and (\ref{CLT-Joint}) \cite{CHPP13} construct consistent and asymptotically normal estimators of the
\mikko{roughness parameter} $\alpha .$ Under Assumptions 1 and 2, \citet[][equations 4.2 and 4.5]{CHPP13} show that%
\begin{equation}
\widehat{\alpha }\left( p\right)_t^n =h_{p}\left( COF\left(
p\right)^n_{t}\right) \overset{u.c.p.}{\rightarrow }\alpha,\label{alpha-hat-Consistency}
\end{equation}%
where%
\begin{equation}
h_{p}\left( x\right) =\frac{\log _{2}\left( x\right) }{p}-\frac{1}{2},\quad
x>0,  \label{function-h}
\end{equation}%
with $\log _{2}$ standing for the base-2 logarithm, whereas
\begin{equation}
COF\left( p\right)^n_{t}=\frac{V\left( X;p,2\right)^n_{t}}{V\left( X;p,1\right)^n_{t}}.
\label{function-COF}
\end{equation}%
By the delta method and the properties of stable convergence, \citet[][Propositions 4.2 and 4.6]{CHPP13} deduce a feasible CLT for the \mikko{roughness parameter} $\alpha .$ Assume that the conditions
of the CLT result (\ref{CLT-Joint}), with $\alpha \in \left( -\frac{1}{2}%
,0\right) \cup \left( 0,\frac{1}{4}\right) $ then for any $p\geq 2$, we have%
\begin{equation}
\frac{{p\log }\left( 2\right) V\left( X;p,1\right)^n_{t}\left(
\widehat{\alpha }\left( p\right)_t^n -\alpha \right) }{\sqrt{%
m_{2p}^{-1}V\left( X;2p,1\right)^n_{t}\left( -1,1\right)
\Lambda _{p}\left( -1,1\right) ^{T}}}\overset{d}{\rightarrow }N\left(
0,1\right) .  \label{CLT-Alpha-Test}
\end{equation}%

As mentioned in the introduction, the CLT \eqref{CLT-Alpha-Test} \mikko{may break down when $\alpha \in \left[\frac{1}{4},\frac{1}{2}\right)$. This motivated \cite{CHPP13} to develop a modified estimator implementing gaps between increments, from which the power variation \eqref{Power-Var} is computed. By letting the gaps widen sufficiently fast, the estimator satisfies a CLT and the relevant increments become asymptotically independent. 
In this case, one can develop a bootstrap method based on the idea of wild bootstrap. While we have also worked out the details of this approach, we relegate them to a web appendix \citep{BHLP16} for two reasons: Firstly,} the case \mikko{$\alpha \in \left[\frac{1}{4},\frac{1}{2}\right)$} seems to be of \mikko{limited} practical interest. Secondly, the theoretical results below show that the local fractional bootstrap method developed in this paper is valid for the whole range $\alpha \in \left(-\frac{1}{2},\frac{1}{2}\right)$.

\mikko{The results of \cite{CHPP13} do not explicitly allow for $\alpha = 0$. However, we can show} that under slightly \mikko{amended} assumptions\mikko{,} the LLN and CLT developed for the COF estimator \mikko{remain valid also} in this case. Indeed, only \mikko{Assumptions} 1(ii) and 2(ii) need to be changed. In the rest of this paper, when $\alpha = 0$, we thus \mikko{work under Assumptions 1--3} above with the following \mikko{modifications} to 1(ii) and 2(ii):

\begin{description}
\item[Assumption 1.]
\item[(ii')] $g^{\left( 2\right) }\left( x\right) = L_{g^{\left( 2\right)
}}\left( x\right) $ and for any $\epsilon >0,$ we have $g^{\left( 2\right)
}\in L^{2}\left( \left( \epsilon ,\infty \right) \right) .$ Furthermore, $%
\left\vert g^{\left( 2\right) }\right\vert $ is non-increasing on the
interval $\left( a,\infty \right) $ for some $a>0.$
\end{description}

\newpage

\begin{description}
\item[Assumption 2.]
\item[(ii')] $R^{\left(4\right) }\left( x\right) = f(x) L_{R^{\left( 4\right) }}\left( x\right) $, where the function $f$ is such that $\mikko{|}f(x)\mikko{|} \leq Cx^{-\beta}$ for some constants $C > 0$ and $\beta > 1/2$.
\end{description}

We now \mikko{obtain} the following result, which is proved in \mikko{Appendix \ref{sec:proofs}}.
\begin{proposition}\label{prop:a0}
Suppose Assumptions 1\mikko{--}3 hold. Then the LLN \eqref{alpha-hat-Consistency} and CLT \eqref{CLT-Alpha-Test} hold with $\alpha = 0$.
\end{proposition}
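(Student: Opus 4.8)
The plan is to verify that the proofs of the LLN \eqref{alpha-hat-Consistency} and CLT \eqref{CLT-Alpha-Test} given by \cite{CHPP13} carry over essentially verbatim once $\alpha$ is set to $0$, with substantive changes occurring only at the finitely many points where the precise exponents in Assumptions 1(ii) and 2(ii) are invoked. Since \eqref{alpha-hat-Consistency} follows from the consistency \eqref{BN et al -Consistency} by the continuous mapping theorem applied to the map $h_p$ (which is smooth at the positive limit point $2^{p/2}$, the value of $COF(p)^n_t$ at $\alpha=0$), and since \eqref{CLT-Alpha-Test} follows from the joint CLT \eqref{CLT-Joint} by the delta method and the properties of stable convergence, it suffices to re-establish \eqref{BN et al -Consistency} and \eqref{CLT-Joint} at $\alpha = 0$. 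Neither reduction uses $\alpha \neq 0$, so the work is confined to the two underlying limit theorems.

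First I would revisit the asymptotics of the variogram and of the normalizing factor $\tau_n(\upsilon)$. Assumption 2(i) is unchanged, so $R(x) = x^{2\alpha+1}L_R(x)$ still holds and reduces to $R(x) = xL_R(x)$ at $\alpha = 0$, whence $\tau_n(\upsilon)$ is of order $\Delta_n^{1/2}$ up to a slowly varying factor; this pins down the correct CLT rate $\Delta_n^{-1/2}$. The crucial point is that the limiting correlation structure of the normalized second order increments of the Gaussian core converges, as $n\rightarrow\infty$, to that of the second order increments of a standard Brownian motion, i.e.\ the fBm $B^H$ with $H = \alpha + 1/2 = 1/2$, which forms a summable moving-average-type correlation sequence. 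Consequently the Gaussian-functional LLN and CLT that underlie \eqref{BN et al -Consistency} and \eqref{CLT-Joint} apply with their usual variance formulas, now evaluated at $H = 1/2$.

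Next I would re-run the core approximation argument, in which the normalized $\BSS$ increments are decomposed into a dominant Gaussian (fBm-like) term plus remainders stemming from the fluctuations of $\sigma$ and from the deviation of $g$ from a pure power kernel. Assumption 3 controls the $\sigma$-remainder exactly as in \cite{CHPP13}, and the associated blocking and localization argument in $\sigma$ is insensitive to the value of $\alpha$. The kernel remainder is governed by $g^{(2)}$ and by $R^{(4)}$: here Assumption 1(ii') supplies a bounded, slowly varying $g^{(2)}$ in place of the singular $x^{\alpha-2}$, and Assumption 2(ii') replaces $R^{(4)}(x) = x^{2\alpha-3}L_{R^{(4)}}(x)$ by $R^{(4)}(x) = f(x)L_{R^{(4)}}(x)$ with $|f(x)| \leq Cx^{-\beta}$, $\beta > 1/2$. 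The task at each such point is to check that these milder bounds still render the bias term $o_p(\Delta_n^{1/2})$, hence asymptotically negligible relative to the CLT scaling; this is comfortable because $\alpha = 0$ lies strictly inside the regime $\alpha < 1/4$ in which the limiting fluctuations dominate the bias.

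The main obstacle will be precisely this bias control under the weakened Assumption 2(ii'): the original argument exploits the exact power $x^{2\alpha-3}$ (which degenerates at $\alpha = 0$, vanishing identically for a constant kernel) to bound the sums of $R^{(4)}$ appearing in the expansion of the increment covariances, so one must verify that the one-sided bound $|R^{(4)}(x)| \leq Cx^{-\beta}L_{R^{(4)}}(x)$ with $\beta > 1/2$, together with the ratio condition in Assumption 2(iii), still yields the required rate, and in particular that $\beta > 1/2$ is exactly the threshold guaranteeing that the scaled bias vanishes. A secondary point to confirm is the non-degeneracy of the limiting CLT: one must check that the quadratic form $\left(-1,1\right)\Lambda_p\left(-1,1\right)^{T}$ in the denominator of \eqref{CLT-Alpha-Test} remains strictly positive when $\Lambda_p$ is evaluated at $H = 1/2$, which can be read off from the explicit expressions for $\lambda_p^{ij}$ in Appendix \ref{app:derive}, using that the lag-$1$ and lag-$2$ normalized second order Brownian increments have distinct autocorrelation structures and thus yield asymptotically non-collinear power variations.
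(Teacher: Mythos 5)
Your proposal follows essentially the same route as the paper's proof: both reduce the claim to re-running the CHPP13/BNCP12 arguments verbatim and re-verifying them only at the finitely many points where the exponents in Assumptions 1(ii) and 2(ii) enter, and both identify $\beta>1/2$ as the exact critical threshold. The only difference in emphasis is that the paper uses the bound $|R^{(4)}(x)|\le Cx^{-\beta}L_{R^{(4)}}(x)$ to construct a square-summable dominating sequence $r(j)=(j-1)^{-\beta}$ for the second-order increment correlations (the Breuer--Major-type summability condition, where $\beta>1/2$ is precisely what makes $\sum_j r(j)^2<\infty$), and separately verifies the kernel concentration condition $\pi^n((\epsilon,\infty))\to 0$ from Assumption 1(ii') to justify the localization in $\sigma$, rather than framing these checks as negligibility of a scaled bias term.
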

\begin{example}
The Ornstein-Uhlenbeck kernel $g(x) = e^{-\lambda x}$, $\lambda > 0$,
satisfies Assumptions 1 and 2 with $\alpha = 0$. Indeed, Assumption 1 is trivially seen to
hold and since
\begin{equation*}
R(x) = \lambda^{-1}\left( 1- e^{-\lambda x}\right) = xL_R(x),
\end{equation*}
where
\begin{equation*}
L_R(x) = x^{-1}\lambda^{-1}\left(1-e^{-\lambda x}\right)
\end{equation*}
is a slowly varying function, Assumption 2(i) also holds. We also have
\begin{equation*}
R^{(m)}(x) = (-1)^{m-1} \lambda^{m-1} e^{-\lambda x}, \quad m\geq 1,
\end{equation*}
so Assumption 2(ii') holds with $f(x) = e^{-\lambda x}$ and $L_{R^{\left( 4\right)}} = - \lambda^{-3}$. Lastly, 
\begin{equation*}
\lim_{x\downarrow 0}L_R(x) = 1,
\end{equation*}
so Assumption 2(iii) clearly also holds.
\end{example}

\section{The local fractional bootstrap} 
\label{sec:fracBoot}
\mikko{In this section, we introduce} a bootstrap method for a general class
of nonlinear transformations of the vector $\left( \bar{V}\left(
X;p,1\right)^n_{t},\bar{V}\left( X;p,2\right)^n_{t}\right)$. We then \mikko{use the method to approximate the sampling} distribution of the \mikko{roughness parameter} estimator $%
\widehat{\alpha }\left( p\right)_t^n $. In particular, we consider \mikko{a hypothesis test}
where the null hypothesis is
\begin{equation*}
 H_{0}:\alpha =\alpha _{0} 
\end{equation*}
\mikko{for some $\alpha
_{0}\in \left( -\frac{1}{2},\frac{1}{2}\right)$,} whereas the
alternative hypothesis is
\begin{equation*}
H_{1}:\alpha \neq \alpha _{0}\mikko{.}
\end{equation*}
Our \mikko{idea} is
to resample the high frequency second order differences of the
$\BSS$ process $X$ as defined in \eqref{Power-Var}. To be valid, the \mikko{method should} mimic the
dependence properties of \mikko{the increments of $X$.} As pointed out in \cite{CHPP13}, under Assumption 2, the 
\mikko{short-term behavior of the Gaussian core $G$ is similar to that of a fractional Brownian motion $B^H$ with Hurst parameter $H=\alpha +1/2$. More precisely, for any $t_0 \in \R$,
\begin{equation*}
\left(\frac{G_{\varepsilon t+t_0} - G_{t_0}}{\sqrt{Var(G_{\varepsilon}-G_0)}}\right)_{t \geq 0} \stackrel{d}{\rightarrow} (B^H_t)_{t \geq 0} \quad \textrm{in $C(\R^+)$}
\end{equation*}
as $\varepsilon \rightarrow 0$.
}

\mikko{Consider for now the
following constant-volatility toy model,
\begin{equation}
\tilde{X}_{t}=\sigma G_{t}=\sigma \int_{-\infty }^{t}g\left( t-s\right) dW_s, \quad t \geq 0,   \label{Toy-Model}
\end{equation}
obtained from $X$ by setting $\sigma _{t}=\sigma >0$ for all $t\in \R$.}
The above discussion suggests that in the context of $\tilde{X}$, the bootstrap scheme should be able to replicate \mikko{the correlation} structure of \mikko{the increments of} a fractional Brownian motion with Hurst
parameter $H=\alpha +1/2.$ \mikko{To this end, we propose the following local
fractional bootstrap algorithm:}
\begin{description}
\item[Step 1.] Specify a null \mikko{hypothesis $H_{0}:\alpha =\alpha
_{0}$ by fixing $\alpha_0 \in \left( -\frac{1}{2},\frac{1}{2}\right)$.}

\item[Step 2.] Generate $\left\lfloor t/\Delta _{n}\right\rfloor $ random
variables, $B_{\Delta_n}^{H},\ldots ,B_{\left\lfloor t/\Delta _{n}\right\rfloor }^{H}$, \mikko{which are independent of the original process $X$,} where $B^{H}$ is a fractional Brownian motion with Hurst parameter $H=\alpha_0 +1/2$.

\item[Step 3.] Finally, \mikko{return the observations}
\begin{equation}
X^{\ast }_{i\Delta_n}=\widehat{\sigma}\cdot B^{H}_{i\Delta_n},\quad i=2\upsilon ,\ldots
,\left\lfloor t/\Delta _{n}\right\rfloor,  \label{New-Boot-DGP}
\end{equation}%
where $\widehat{\sigma} = \widehat{\sigma }(p,\upsilon)^n$ is a consistent estimator of
the volatility $\sigma .$
\end{description}

This bootstrap algorithm deserves a few comments. First, we generate the
bootstrap observations under the null hypothesis\ $H_{0}:\alpha =\alpha
_{0}$; this feature is not only natural, but it is important to minimize
the probability of a type I error, see e.g. \cite{DM99}.
Second, although (\ref{New-Boot-DGP}) is motivated by the very simple model (%
\ref{Toy-Model}), as we will \mikko{show} below, this does not prevent the
bootstrap method to be valid more generally. In particular, its validity
extends to the case where the volatility is not constant as in (\ref{one}).
The choice of $\widehat{\sigma}$ may change depending
on the statistics of interest. As we will see shortly, for instance, when we
consider the vector $\Delta _{n}^{-1/2}\left( \bar{V}\left( X;p,1\right)^n_{t},\bar{V}\left( X;p,2\right)^n_{t}\right) $, one
could simply use $\widehat{\sigma } = \widehat{\sigma }(p,\upsilon)_t^n =\left( m_{p}^{-1}%
\bar{V}\left( X;p,\upsilon\right)^n_{t}\right)^{1/p}$, cf. Equation \eqref{BN et al -Consistency}.

Define the bootstrap power variations analogues of (\ref{Power-Var}) and (%
\ref{Power-Var-Bar}), respectively, as follows
\begin{eqnarray}
V^{\ast }\left( X,B^{H};p,\upsilon\right)^n_{t}&\equiv& \frac{%
\left\vert  \widehat{\sigma }(p,\upsilon)_t^n\right\vert ^{p}}{\overline{\mu}(p,\upsilon)_t^n} V(B^H;p,\upsilon)^n_t,  \label{Boot-New-Power-Var} \\
\bar{V}^{\ast }\left( X,B^{H};p,\upsilon\right)^n_{t} &\equiv
&\Delta _{n}\tau _{n}\left( \upsilon \right) ^{-p}V^{\ast }\left(
X,B^{H};p,\upsilon\right)^n_{t}  \notag \\
&=&\frac{\left\vert  \widehat{\sigma }(p,\upsilon)_t^n\right\vert ^{p}}{%
\overline{\mu}(p,\upsilon)_t^n}\bar{V}\left( B^{H};p,\upsilon\right)^n_{t},  \label{Boot-New-Power-Var-Bar}
\end{eqnarray}%
where $\overline{\mu}(p,\upsilon)_t^n =\Delta _{n}\tau _{n}\left(
\upsilon \right) ^{-p}\mu(p,\upsilon)_t^n$ with $\mu(p,\upsilon)_t^n =\mathbb{E}^{\ast }\left( V\left( B^{H};p,\upsilon
\right)^n_{t}\right) $.

\begin{lemma}
\label{LemmaBoot-New-moments} Consider (\ref{one}), (\ref{Boot-New-Power-Var}%
), and (\ref{Boot-New-Power-Var-Bar}) where $B^{H}$ is a fractional Brownian
motion with Hurst parameter $H=\alpha _{0}+1/2$. It follows that

\begin{description}
\item[(i)] $\mathbb{E}^{\ast }\left( \bar{V}^{\ast }\left(
X,B^{H};p,\upsilon\right)^n_{t}\right) =\left\vert \widehat{\sigma }(p,\upsilon)_t^n \right\vert ^{p}.$

\item[(ii)] $Var^{\ast }\left( \Delta _{n}^{-1/2}\bar{V}^{\ast }\left(
X,B^{H};p,1\right)^n_{t}\right) =\underset{\equiv \lambda
_{p,n}^{11}}{\underbrace{\Delta _{n}^{-1}Var\left( \bar{V}\left(
B^{H};p,1\right)^n_{t}\right) }}  \frac{\left\vert\widehat{\sigma }(p,1)_t^n\right\vert ^{2p}}{\left( \overline{\mu}(p,1)_t^n\right)^{2}},$

\item[(iii)] $Var^{\ast }\left( \Delta _{n}^{-1/2}\bar{V}^{\ast }\left(
X,B^{H};p,2\right)^n_{t}\right) =\underset{\equiv \lambda
_{p,n}^{22}}{\underbrace{\Delta _{n}^{-1}Var\left( \bar{V}\left(
B^{H};p,2\right)^n_{t}\right) }} \frac{\left\vert\widehat{\sigma }(p,2)_t^n\right\vert ^{2p}}{\left( \overline{\mu}(p,2)_t^n\right)^{2}},$

\item[(iv)] $Cov^{\ast }\left( \Delta _{n}^{-1/2}\bar{V}^{\ast }\left(
X,B^{H};p,1\right)^n_{t},\Delta _{n}^{-1/2}\bar{V}^{\ast
}\left( X,B^{H};p,2\right)^n_{t}\right) $

$=\underset{\equiv \lambda _{p,n}^{12}}{\underbrace{\Delta
_{n}^{-1}Cov\left( \bar{V}\left( B^{H};p,1\right)^n_{t},\bar{V}%
\left( B^{H};p,2\right)^n_{t}\right) }} \frac{\left\vert\widehat{\sigma }(p,1)_t^n\right\vert ^{p}\left\vert\widehat{\sigma }(p,2)_t^n\right\vert ^{p}}{ \overline{\mu}(p,1)_t^n \overline{\mu}(p,2)_t^n},$

\item[(v)] If $\left\vert \widehat{\sigma }(p,\upsilon)_t^n\right\vert
^{2p}\overset{u.c.p.}{\rightarrow }\int_{0}^{t}\left\vert \sigma
_{s}\right\vert ^{2p}ds$ and $\left\vert \widehat{\sigma }(p,1)_t^n \right\vert^{p}\left\vert \widehat{\sigma }(p,2)_t^n\right%
\vert ^{p}\overset{u.c.p.}{\rightarrow }\int_{0}^{t}\left\vert \sigma
_{s}\right\vert ^{2p}ds$, \text{ then }
\begin{equation*}
p\underset{n\rightarrow \infty }{\lim }\Sigma^{\ast }\left(
X,B^{H};p\right)^n_{t}-\widetilde{\Sigma }_{p,t}^n=0,  
\end{equation*}%
where%
\begin{eqnarray*}
\Sigma^{\ast }\left( X,B^{H};p\right)^n_{t} &\equiv
&Var^{\ast }\left( \Delta _{n}^{-1/2}\left(
\begin{array}{c}
\bar{V}^{\ast }\left( X,B^{H};p,1\right)^n_{t} \\
\bar{V}^{\ast }\left( X,B^{H};p,2\right)^n_{t}%
\end{array}%
\right) \right), \text{ and } \\
\widetilde{\Sigma }_{p,t}^n &\equiv &\Lambda_{p,t}^n\int_{0}^{t}\left\vert \sigma _{s}\right\vert ^{2p}ds,
\end{eqnarray*}%
such that
\begin{equation*}
\Lambda_{p,t}^n=\left(
\begin{array}{cc}
\left(\overline{\mu}(p,1)_t^n\right) ^{-2}\lambda _{p,n}^{11} & \left(
\overline{\mu}(p,1)_t^n\right) ^{-1}\left( \overline{\mu}(p,2)_t^n\right) ^{-1}\lambda _{p,n}^{12} \\
\left(\overline{\mu}(p,2)_t^n\right) ^{-1}\left( \overline{\mu}(p,2)_t^n\right) ^{-1}\lambda _{p,n}^{12} & \left( \overline{\mu}(p,2)_t^n\right) ^{-2}\lambda _{p,n}^{22}%
\end{array}%
\right) . 
\end{equation*}
\end{description}
\end{lemma}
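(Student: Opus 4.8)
The plan is to establish parts (i)--(iv) as exact algebraic identities and then deduce part (v) from them. Everything rests on two structural observations. First, the estimator $\widehat{\sigma}(p,\upsilon)_t^n$ is a deterministic functional of the original sample $X$, so under the bootstrap measure $\mathbb{P}^\ast$ it is a constant and may be pulled outside $\mathbb{E}^\ast$, $Var^\ast$ and $Cov^\ast$. Second, by Step~2 of the algorithm $B^H$ is simulated independently of $X$; hence the bootstrap moments of any functional of $B^H$ coincide with its unconditional moments under $\mathbb{P}$, and the quantities $\mu(p,\upsilon)_t^n$, $\tau_n(\upsilon)$ and $\lambda_{p,n}^{ij}$ are non-random.

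For part (i) I would factor the constant $|\widehat{\sigma}(p,\upsilon)_t^n|^p/\overline{\mu}(p,\upsilon)_t^n$ out of $\mathbb{E}^\ast$ in \eqref{Boot-New-Power-Var-Bar} and use $\mathbb{E}^\ast(\bar{V}(B^H;p,\upsilon)_t^n)=\Delta_n\tau_n(\upsilon)^{-p}\mathbb{E}^\ast(V(B^H;p,\upsilon)_t^n)=\overline{\mu}(p,\upsilon)_t^n$, so the normalization cancels and leaves $|\widehat{\sigma}(p,\upsilon)_t^n|^p$. Parts (ii)--(iv) follow by the same device: pulling the squared constant $|\widehat{\sigma}(p,\upsilon)_t^n|^{2p}/(\overline{\mu}(p,\upsilon)_t^n)^2$ (respectively the product $|\widehat{\sigma}(p,1)_t^n|^p|\widehat{\sigma}(p,2)_t^n|^p/(\overline{\mu}(p,1)_t^n\overline{\mu}(p,2)_t^n)$ for the covariance) out of $Var^\ast$ and $Cov^\ast$, and identifying the leftover factor $\Delta_n^{-1}Var(\bar{V}(B^H;p,\upsilon)_t^n)$, and its covariance analogue, as exactly the defining expressions for $\lambda_{p,n}^{11},\lambda_{p,n}^{22},\lambda_{p,n}^{12}$; independence of $B^H$ from $X$ is what converts the starred moments into ordinary ones.

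For part (v) I would substitute the identities (ii)--(iv) into $\Sigma^\ast(X,B^H;p)_t^n$ and subtract $\widetilde{\Sigma}_{p,t}^n$ entrywise. Each entry of the difference factorizes into a non-random prefactor from $\Lambda_{p,t}^n$ times a single discrepancy: the $(1,1)$ entry is $(\overline{\mu}(p,1)_t^n)^{-2}\lambda_{p,n}^{11}\big(|\widehat{\sigma}(p,1)_t^n|^{2p}-\int_0^t|\sigma_s|^{2p}ds\big)$, the $(2,2)$ entry is analogous, and the off-diagonal entry carries the discrepancy $|\widehat{\sigma}(p,1)_t^n|^p|\widehat{\sigma}(p,2)_t^n|^p-\int_0^t|\sigma_s|^{2p}ds$. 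By the two consistency hypotheses of (v) these discrepancies converge to $0$ in probability, so Slutsky's lemma delivers the stated probability limit provided the prefactors are bounded.

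The main obstacle is exactly this boundedness: I must verify that $\lambda_{p,n}^{ij}$ and $\overline{\mu}(p,\upsilon)_t^n$ stay bounded away from $0$ and $\infty$ as $n\to\infty$. The convergence $\lambda_{p,n}^{ij}\to\lambda_p^{ij}$ to finite, positive limits is supplied by the CLT normalization \eqref{Lambda-ij}. For $\overline{\mu}(p,\upsilon)_t^n=\Delta_n\tau_n(\upsilon)^{-p}\mu(p,\upsilon)_t^n$ I would use the self-similarity and stationary increments of $B^H$ to obtain $\mu(p,\upsilon)_t^n\sim(t/\Delta_n)m_p c_H^{p/2}(\upsilon\Delta_n)^{pH}$, with $c_H=\mathbb{E}\big[(B^H_2-2B^H_1+B^H_0)^2\big]$, together with the variogram expansion $\tau_n(\upsilon)^2=4R(\upsilon\Delta_n)-R(2\upsilon\Delta_n)\sim(\upsilon\Delta_n)^{2\alpha+1}L_R(\upsilon\Delta_n)(4-2^{2\alpha+1})$ from Assumption 2(i). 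Under $H_0$, where $H=\alpha_0+1/2=\alpha+1/2$, the powers of $\Delta_n$ cancel, $\overline{\mu}(p,\upsilon)_t^n$ settles at a strictly positive finite limit, and the argument closes; keeping track of the slowly varying factor $L_R$ in this cancellation is the only delicate point.
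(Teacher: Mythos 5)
Your proposal follows the paper's own proof essentially verbatim: parts (i)--(iv) are obtained by pulling the ($\mathbb{P}^\ast$-constant) factors $\left\vert \widehat{\sigma}(p,\upsilon)_t^n\right\vert^p / \overline{\mu}(p,\upsilon)_t^n$ outside $\mathbb{E}^\ast$, $Var^\ast$ and $Cov^\ast$ and using the independence of $B^H$ from the data, and part (v) by substituting these identities entrywise and invoking the assumed consistency of $\widehat{\sigma}$. The only difference is that the paper declares (v) to follow ``immediately,'' whereas you additionally verify that the prefactors $\lambda_{p,n}^{ij}$ and $\left(\overline{\mu}(p,\upsilon)_t^n\right)^{-1}$ stay bounded --- a point the paper leaves implicit and which your variogram computation under $H=\alpha_0+1/2$ addresses correctly (modulo the behaviour of the slowly varying factor $L_R$ near zero, which you rightly flag as the one delicate spot).
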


Part (v) of Lemma \ref{LemmaBoot-New-moments} shows that the bootstrap
variance $\Sigma^{\ast }\left( X,B^{H};p\right)^n_{t}$
will only be a consistent estimator of $\Sigma_{p,t}$
under the general model (\ref{one}) if the following three conditions hold
true:%
\begin{equation}
\left\vert \widehat{\sigma }(p,\upsilon)_t^n\right\vert ^{2p}\overset{%
u.c.p.}{\rightarrow }\int_{0}^{t}\left\vert \sigma _{s}\right\vert ^{2p}ds, \quad \left\vert \widehat{\sigma }(p,1)_t^n\right\vert ^{p}\left\vert
\widehat{\sigma }(p,2)_t^n\right\vert ^{p}\overset{u.c.p.}{\rightarrow }%
\int_{0}^{t}\left\vert \sigma _{s}\right\vert ^{2p}ds
\label{condition1}
\end{equation}%
and
\begin{equation}
\Lambda_{p,t}^n\rightarrow \Lambda _{p} \quad \text{(i.e. }%
\overline{\mu}(p,\upsilon)_t^n\rightarrow 1\text{)}.  \label{condition2}
\end{equation}%
It is easy to see that letting $\widehat{\sigma}(p,\upsilon)_t^n=\left(
m_{2p}^{-1}\bar{V}\left( X;2p,\upsilon\right)^n_{t}\right)
^{1/2p}$ will satisfy (\ref{condition1}). However, it may not be possible to
satisfy (\ref{condition2}). For instance, for $p=2$ (which is \mikko{the most} important
case in practice), one can show that (see Appendix \ref{app:derive})%
\begin{equation*}
\mu(2,1)_t^n=\left( \left\lfloor t/\Delta _{n}\right\rfloor -1\right)
\Delta _{n}^{2H}\left( 4-2^{2H}\right).
\label{Moment-Frac-Brownian}
\end{equation*}%
However, despite $\Sigma^{\ast }\left( X,B^{H};p\right)^n_{t}$ not being consistent for $\Sigma_{p,t}$, we can
still achieve an asymptotically valid bootstrap for the studentized
distribution. To this end, we need to find a consistent estimator of $\Sigma
^{\ast }\left( X,B^{H};p\right)^n_{t}$ based on bootstrap
observations. In the following, without loss of generality, we will use both
choices of $\widehat{\sigma}(p,\upsilon)_t^n$ given by
\begin{equation}
\widehat{\sigma}(p,\upsilon)_t^n=\left( m_{p}^{-1}\bar{V}\left(
X;p,\upsilon\right)^n_{t}\right) ^{1/p},\text{ for }%
p=1,2,\ldots ,\text{ and }\upsilon =1,2  \label{Sigma-hat-1}
\end{equation}%
and%
\begin{equation}
\widehat{\sigma}(p,\upsilon)_t^n=\left( m_{\beta p}^{-1}\bar{V}\left(
X;\beta p,\upsilon\right)^n_{t}\right) ^{1/\beta p},\text{ for
}\beta >0,\text{ }p=1,2,\ldots ,\text{ and }\upsilon =1,2.
\label{Sigma-hat-2}
\end{equation}%
We propose the following consistent estimator of $\Sigma^{\ast }\left(
X,B^{H};p\right)^n_{t}$ defined by%
\begin{equation}
\widehat{\Sigma }^{\ast }\left( X,B^{H};p\right)^n_{t} = \left(
\begin{array}{cc}
\lambda _{p,n}^{11} \frac{\left\vert \widehat{\sigma}(p,1)_t^{n\ast}\right\vert ^{2p}}{\left( \overline{\mu}(p,1)_t^n\right) ^{2}}   & \lambda
_{p,n}^{12}  \frac{\left\vert \widehat{\sigma}(p,1)_t^{n\ast}\right\vert
^{p}\left\vert \widehat{\sigma}(p,2)_t^{n\ast}\right\vert ^{p}}{\overline{\mu}(p,1)_t^n\overline{\mu}(p,2)_t^n}   \\
 \lambda _{p,n}^{12} \frac{  \left\vert \widehat{\sigma}(p,1)_t^{n\ast}\right\vert ^{p}\left\vert \widehat{\sigma}(p,2)_t^{n\ast}\right\vert
^{p}}{\overline{\mu}(p,1)_t^n\overline{\mu}(p,2)_t^n}  &  \lambda
_{p,n}^{22} \frac{\left\vert \widehat{\sigma}(p,2)_t^{n\ast}\right\vert ^{2p}}{ \left( \overline{\mu}(p,2)_t^n\right) ^{2}}
\end{array}%
\right)   \label{Boot-New-Assymp-Var-hat}
\end{equation}%
where%
\begin{equation}
\left\vert \widehat{\sigma}(p,\upsilon)_t^{n\ast}\right\vert ^{p}=\bar{V}%
^{\ast }\left( X,B^{H};p,\upsilon\right)^n_{t}.
\label{Sigma-hat-1-star}
\end{equation}

\begin{theorem}
\label{TheorJointCLT-a} Suppose \mikko{that Assumptions 1--3 hold} for $\alpha \in \left( -%
\frac{1}{2},\mikko{\frac{1}{2}}\right)$. Assume that bootstrap observations are given by (%
\ref{New-Boot-DGP}) where $B^{H}$ is a fractional Brownian motion with Hurst
parameter $H=\alpha _{0}+1/2$. It follows that as $n\rightarrow \infty ,$%
\begin{eqnarray*}
\widehat{\mathbf{S}}_{n}^{\ast } &=&\left( \widehat{\Sigma}^{\ast
}\left( X,B^{H};p\right)^n_{t}\right) ^{-1/2}\Delta
_{n} ^{-1/2}\left(
\begin{array}{c}
\bar{V}^{\ast }\left( X,B^{H};p,1\right)^n_{t}-\mathbb{E}^{\ast
}\left( \bar{V}^{\ast }\left( X,B^{H};p,1\right)^n_{t}\right)
\\
\bar{V}^{\ast }\left( X,B^{H};p,2\right)^n_{t}-\mathbb{E}^{\ast
}\left( \bar{V}^{\ast }\left( X,B^{H};p,2\right)^n_{t}\right)%
\end{array}%
\right) \\
&&\overset{d^{\ast }}{\rightarrow }N\left( 0,I_{2}\right) ,
\end{eqnarray*}%
in prob-$\mathbb{P}.$
\end{theorem}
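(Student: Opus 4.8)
The plan is to exploit that, conditionally on the sample, the only randomness in the bootstrap world comes from the simulated fractional Brownian motion $B^H$, which is independent of $X$. Thus the centred bootstrap power variations are a fixed, data-dependent linear rescaling of genuine fBm power variations, and the whole argument reduces to the classical power-variation central limit theorem for $B^H$ followed by a studentization (Slutsky) step. First I would rewrite the numerator. Since the factors $|\widehat{\sigma}(p,\upsilon)^n_t|^p/\overline{\mu}(p,\upsilon)^n_t$ are $\mathbb{P}^{\ast}$-measurable (they depend only on $X$),
\begin{equation*}
\Delta_n^{-1/2}\begin{pmatrix} \bar{V}^{\ast}(X,B^H;p,1)^n_t - \mathbb{E}^{\ast}[\bar{V}^{\ast}(X,B^H;p,1)^n_t] \\ \bar{V}^{\ast}(X,B^H;p,2)^n_t - \mathbb{E}^{\ast}[\bar{V}^{\ast}(X,B^H;p,2)^n_t] \end{pmatrix} = D_n \mathbf{T}_n^{\ast},
\end{equation*}
where $D_n = \mathrm{diag}\big(|\widehat{\sigma}(p,1)^n_t|^p/\overline{\mu}(p,1)^n_t,\ |\widehat{\sigma}(p,2)^n_t|^p/\overline{\mu}(p,2)^n_t\big)$ and $\mathbf{T}_n^{\ast}$ is the bivariate vector $\Delta_n^{-1/2}(\bar{V}(B^H;p,\upsilon)^n_t - \mathbb{E}^{\ast}[\bar{V}(B^H;p,\upsilon)^n_t])_{\upsilon=1,2}$, a functional of $B^H$ alone. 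By Lemma~\ref{LemmaBoot-New-moments}(ii)--(iv), $Var^{\ast}(D_n\mathbf{T}_n^{\ast}) = D_n\Lambda_{p,n}D_n^{T} = \Sigma^{\ast}(X,B^H;p)^n_t$, with $\Lambda_{p,n}=Var^{\ast}(\mathbf{T}_n^{\ast})$ having entries $\lambda_{p,n}^{ij}$.

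Next I would establish the conditional CLT for the numerator studentized by its \emph{exact} conditional variance. The engine is the power-variation CLT for $B^H$: conditionally on the sample, $\mathbf{T}_n^{\ast}\overset{d}{\rightarrow}N(0,\bar{\Lambda}_p)$ with $\bar{\Lambda}_p=\lim_n\Lambda_{p,n}$. Because the statistic is built from \emph{second-order} increments, the normalized increment sequence of $B^H$ is stationary Gaussian with square-summable autocorrelations for every $H=\alpha_0+1/2\in(0,1)$, so a Breuer--Major argument yields the $\sqrt{\Delta_n}$-rate on the whole range $\alpha_0\in(-\frac{1}{2},\frac{1}{2})$ (this is exactly why the method covers both COF regimes); this is the Gaussian building block behind \eqref{CLT-Joint}. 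Since $B^H$ is independent of $X$, the convergence holds for every realization of the sample, i.e. $\overset{d^{\ast}}{\rightarrow}$ in prob-$\mathbb{P}$. Setting $A_n=(\Sigma^{\ast})^{-1/2}D_n$ gives $A_n\Lambda_{p,n}A_n^{T}=I_2$ identically, so $A_n$ is bounded (as $\Lambda_{p,n}\rightarrow\bar{\Lambda}_p\succ 0$); a subsequence/compactness argument---extract $A_{n_k}\rightarrow M$, note $M\bar{\Lambda}_pM^{T}=I_2$, and apply Slutsky---then shows $(\Sigma^{\ast})^{-1/2}D_n\mathbf{T}_n^{\ast}\overset{d^{\ast}}{\rightarrow}N(0,I_2)$ in prob-$\mathbb{P}$. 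I use the subsequence device rather than a plain continuous-mapping theorem precisely because $D_n$ need not converge: the factors $\overline{\mu}(p,\upsilon)^n_t$ do not tend to $1$, as noted after \eqref{condition2}.

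It then remains to replace $\Sigma^{\ast}$ by the feasible $\widehat{\Sigma}^{\ast}$. These matrices differ only through $|\widehat{\sigma}(p,\upsilon)^n_t|^p$ versus $|\widehat{\sigma}(p,\upsilon)^{n\ast}_t|^p=\bar{V}^{\ast}(X,B^H;p,\upsilon)^n_t$, so I would invoke a bootstrap law of large numbers: $\bar{V}(B^H;p,\upsilon)^n_t$ has $\mathbb{P}^{\ast}$-mean $\overline{\mu}(p,\upsilon)^n_t$ and variance $\Delta_n\lambda_{p,n}^{\upsilon\upsilon}\rightarrow 0$, whence $\bar{V}^{\ast}(X,B^H;p,\upsilon)^n_t\overset{\mathbb{P}^{\ast}}{\rightarrow}|\widehat{\sigma}(p,\upsilon)^n_t|^p$ and therefore $\widehat{\Sigma}^{\ast}-\Sigma^{\ast}\overset{\mathbb{P}^{\ast}}{\rightarrow}0$ in prob-$\mathbb{P}$. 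Assumptions 1--3 enter here, through the consistency \eqref{BN et al -Consistency} of $\widehat{\sigma}$, to guarantee that $|\widehat{\sigma}(p,\upsilon)^n_t|^p$ converges to the strictly positive and finite $\int_0^t|\sigma_s|^p\,ds$, so that $\Sigma^{\ast}$ is uniformly positive definite with $\mathbb{P}$-probability tending to one. Continuity of the matrix square root and inverse then gives $(\widehat{\Sigma}^{\ast})^{-1/2}(\Sigma^{\ast})^{1/2}\overset{\mathbb{P}^{\ast}}{\rightarrow}I_2$, and combining this with the previous step through Slutsky's theorem for conditional convergence yields $\widehat{\mathbf{S}}_n^{\ast}=(\widehat{\Sigma}^{\ast})^{-1/2}(\Sigma^{\ast})^{1/2}\,(\Sigma^{\ast})^{-1/2}D_n\mathbf{T}_n^{\ast}\overset{d^{\ast}}{\rightarrow}N(0,I_2)$.

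I expect the main obstacle to be the second step: proving the conditional fBm power-variation CLT with its full bivariate (across $\upsilon=1,2$) covariance structure, and---more delicately---pushing it through the $n$-dependent, data-dependent normalization $A_n$ without assuming $D_n$ converges. The positive-definiteness of $\bar{\Lambda}_p$ together with the boundedness of $D_n$ away from $0$ and $\infty$ are the conditions that make both steps work, and it is in verifying these (via the consistency of $\widehat{\sigma}$ under Assumptions 1--3) that the structural hypotheses on $X$ are actually used; the bootstrap CLT itself rests almost entirely on the Gaussianity and self-similarity of $B^H$.
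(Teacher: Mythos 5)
Your proof is correct and follows essentially the same route as the paper's: the same decomposition of the centred bootstrap vector as $D_n\mathbf{T}_n^{\ast}$ with $\mathbf{T}_n^{\ast}$ a pure fBm power-variation statistic handled by the Breuer--Major CLT, followed by the replacement of the exact conditional variance $\Sigma^{\ast}$ with the feasible $\widehat{\Sigma}^{\ast}$. The only (immaterial) differences in execution are that the paper avoids your subsequence/compactness device by noting that the data-dependent factors cancel exactly, so that $D^{-1}\left(\Sigma^{\ast}\right)^{1/2}=\Theta_{p,n}$ is a deterministic Cholesky-type factor of $\Lambda_{p,n}$ converging along with $\Lambda_{p,n}\rightarrow\Lambda_{p}$, and that it justifies $\widehat{\Sigma}^{\ast}-\Sigma^{\ast}\overset{\mathbb{P}^{\ast}}{\rightarrow}0$ via an $L_{1}$/non-negativity argument rather than your (if anything more transparent) Chebyshev bound using $\Delta_{n}\lambda_{p,n}^{\upsilon\upsilon}\rightarrow 0$.
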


Thus, we can deduce a bootstrap CLT result for the bootstrap smoothness
parameter estimator $\widehat{\alpha }^{\ast }\left( p\right)_t^n$ analogue of $\widehat{\alpha }\left( p\right)_t^n.$ To this end, let%
\begin{equation*}
\widehat{\alpha }^{\ast }\left( p\right)_t^n=h_{p}\left(
COF^{\ast }\left( p\right)_t^n\right) ,
\end{equation*}%
where $h_{p}\left( \cdot \right) $ is defined by (\ref{function-h}), whereas
\begin{equation}
COF^{\ast }\left( p\right)^n_{t}=\frac{V^{\ast }\left(
X,B^{H};p,2\right)^n_{t}}{V^{\ast }\left( X,B^{H};p,1\right)^n_{t}}.  \label{function-COF-Boot-a}
\end{equation}%
To understand the asymptotic behavior of $COF^{\ast }\left( p\right)_t^n,$ we can write
\begin{equation*}
COF^{\ast }\left( p\right)_t^n=\left( \frac{\tau _{n}\left(
2\right) ^{2}}{\tau _{n}\left(1\right) ^{2}}\right)
^{p/2}\frac{\bar{V}^{\ast }\left( X,B^{H};p,2\right)^n_{t}}{%
\bar{V}^{\ast }\left( X,B^{H};p,1\right)^n_{t}}.
\end{equation*}%
From Assumption 2, we have
\begin{equation*}
\left( \frac{\tau _{n}\left( 2\right) ^{2}}{\tau _{n}\left(
1\right) ^{2}}\right) ^{p/2}\rightarrow 2^{\frac{\left( 2\alpha
+1\right) p}{2}},
\end{equation*}%
and thus by Theorem \ref{TheorJointCLT-a}, part (i) of Lemma \ref%
{LemmaBoot-New-moments}\mikko{,} in conjunction with (\ref{Sigma-hat-1}) and (\ref%
{Sigma-hat-2})\mikko{,} and by using equation (\ref{BN et al -Consistency}) with $%
\upsilon =1,2,$ we can deduce that
\begin{equation*}
\frac{\bar{V}^{\ast }\left( X,B^{H};p,2\right)^n_{t}}{\bar{V}%
^{\ast }\left( X,B^{H};p,1\right)^n_{t}}\overset{\mathbb{P}%
^{\ast }}{\rightarrow }1,\quad \text{in prob-}\mathbb{P}.
\end{equation*}%
It follows that, by applying the delta method on the CLT results of Theorem %
\ref{TheorJointCLT-a}, we can characterize the distribution of $\widehat{\alpha
}^{\ast }\left( p\right)_t^n.$ These results are summarized
in the following theorem.

\begin{theorem}
\label{Theor-Alpha-hat-boot-a} Suppose \mikko{that Assumptions 1--3 hold} for $\alpha \in
\left( -\frac{1}{2},\mikko{\frac{1}{2}}\right)$. Assume that bootstrap observations
are given by (\ref{New-Boot-DGP}) where $B^{H}$ is a fractional Brownian
motion with Hurst parameter $H=\alpha _{0}+1/2$. It follows that for any $%
p\geq 2,$ as $n\rightarrow \infty ,$
\begin{equation*}
T_{\widehat{\alpha },n}^{\ast }\equiv \Delta _{n}^{-1/2}\frac{\left(
\widehat{\alpha }^{\ast }\left( p\right)_t^n-\widetilde{%
\alpha }\left( p\right)_t^n\right) }{\sqrt{\widehat{V}%
^{\ast }\left( \widehat{\alpha }\right) _{t}}}\overset{d^{\ast }}{%
\rightarrow }N\left( 0,1\right) ,\text{ in prob-}\mathbb{P},
\end{equation*}%
where%
\begin{equation}
\widetilde{\alpha }\left( p\right)_t^n=h_{p}\left(
\widetilde{COF}\left( p\right)^n_{t}\right) ,
\label{alpha-tilde}
\end{equation}%
such that
\begin{eqnarray*}
\widetilde{COF}\left( p\right)^n_{t} &=&\left( \frac{\tau
_{n}\left( 2\right) ^{2}}{\tau _{n}\left( 1\right) ^{2}}%
\right) ^{p/2}\frac{\left\vert \widehat{\sigma}(p,2)_t^n\right\vert ^{p}%
}{\left\vert \widehat{\sigma}(p,1)_t^n\right\vert ^{p}} \\
&=&\left\{
\begin{array}{lcl}
\frac{V\left( X;p,2\right)^n_{t}}{V\left( X;p,1\right)^n_{t}}, & \text{if} &\widehat{\sigma}(p,\upsilon)_t^n
\text{ is given by }(\ref{Sigma-hat-1}), \\
\left( \frac{V\left( X;\beta p,2\right)^n _{t}}{V\left( X;\beta
p,1\right)^n_{t}}\right) ^{1/\beta }, & \text{if} &\widehat{\sigma}(p,\upsilon)_t^n\ \text{ is given by }(\ref{Sigma-hat-2}),%
\end{array}%
\right.
\end{eqnarray*}%
and the estimator of the asymptotic variance $\widehat{V}^{\ast }\left(
\widehat{\alpha }\right) _{t}$ is defined as%
\begin{equation}
\widehat{V}^{\ast }\left( \widehat{\alpha }\right) _{t}=\frac{1}{\left( {%
p\log }\left( 2\right) \right) ^{2}}\widehat{\varsigma }^{\ast }\left(
X,B^{H};p\right)^n_{t},  \label{Var-Boot-Frac-Brownian-Hat}
\end{equation}%
with%
\begin{multline*}
\widehat{\varsigma }^{\ast }\left( X,B^{H};p\right)^n_{t}  \\ 
\begin{aligned}
& =\frac{\lambda _{p,n}^{11}    }{\left[ \mathbb{E}^{\ast
}\left( \bar{V}^{\ast }\left( X,B^{H};p,1\right)^n_{t}\right) %
\right] ^{2}}       \frac{\left( \left\vert \widehat{\sigma}(p,1)_t^{n\ast}\right\vert ^{p}\right) ^{2}}{\left(\overline{\mu}(p,1)_t^n\right) ^{2}}    +  \frac{\lambda
_{p,n}^{22}}{\left[ \mathbb{E}^{\ast }\left( \bar{V}^{\ast
}\left( X,B^{H};p,2\right)^n _{t}\right) \right] ^{2}}           \frac{\left( \left\vert \widehat{\sigma}(p,2)_t^{n\ast}\right\vert ^{p}\right) ^{2}}{\left( \overline{\mu}(p,2)_t^n\right) ^{2}}
 \\ 
& \qquad -2\frac{\lambda _{p,n}^{12}}{\mathbb{E}^{\ast }\left( \bar{V}^{\ast }\left( X,B^{H};p,1\right)^n_{t}\right) \mathbb{E}^{\ast }\left( \bar{V}^{\ast }\left(
X,B^{H};p,2\right)^n_{t}\right) }  \frac{\left\vert \widehat{\sigma}(p,1)_t^{n\ast}\right\vert ^{p}\left\vert \widehat{\sigma}(p,2)_t^{n\ast}\right\vert ^{p}}{\overline{\mu}(p,1)_t^n\overline{\mu}(p,2)_t^n}.
\end{aligned}
\end{multline*}
\end{theorem}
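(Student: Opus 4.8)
The plan is to obtain the studentized bootstrap CLT for $\widehat{\alpha}^{\ast}(p)_t^n$ from the joint bootstrap CLT of Theorem \ref{TheorJointCLT-a} by a conditional delta method, exploiting that $\widehat{\alpha}^{\ast}(p)_t^n = h_p(COF^{\ast}(p)_t^n)$ is a smooth transformation of the bivariate bootstrap vector whose conditional law is controlled by that theorem. Throughout I abbreviate $\bar{V}_\upsilon^{\ast} := \bar{V}^{\ast}(X,B^H;p,\upsilon)_t^n$ and $m_\upsilon^{\ast} := \mathbb{E}^{\ast}(\bar{V}_\upsilon^{\ast})$, so that $m_\upsilon^{\ast} = |\widehat{\sigma}(p,\upsilon)_t^n|^p$ by part (i) of Lemma \ref{LemmaBoot-New-moments}. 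The crucial observation is that, writing $h_p$ via (\ref{function-h}) in terms of the natural logarithm, the deterministic prefactor $(\tau_n(2)^2/\tau_n(1)^2)^{p/2}$ and the additive constant $-1/2$ cancel in the difference $\widehat{\alpha}^{\ast}(p)_t^n - \widetilde{\alpha}(p)_t^n$. Indeed, using the representation of $COF^{\ast}$ and (\ref{alpha-tilde}),
\[ \widehat{\alpha}^{\ast}(p)_t^n - \widetilde{\alpha}(p)_t^n = \frac{1}{p\log 2}\Big[\big(\log \bar{V}_2^{\ast} - \log m_2^{\ast}\big) - \big(\log \bar{V}_1^{\ast} - \log m_1^{\ast}\big)\Big], \]
so the centering $\widetilde{\alpha}(p)_t^n$ amounts exactly to centering the bootstrap power variations at their conditional means, and the task reduces to a logarithmic linearization.

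Second, I would Taylor-expand each term $\log \bar{V}_\upsilon^{\ast} - \log m_\upsilon^{\ast}$ about $m_\upsilon^{\ast}$, whose leading contribution is $(\bar{V}_\upsilon^{\ast}-m_\upsilon^{\ast})/m_\upsilon^{\ast}$. This yields
\[ \Delta_n^{-1/2}\big(\widehat{\alpha}^{\ast}(p)_t^n - \widetilde{\alpha}(p)_t^n\big) = \frac{1}{p\log 2}\Big(\frac{\Delta_n^{-1/2}(\bar{V}_2^{\ast}-m_2^{\ast})}{m_2^{\ast}} - \frac{\Delta_n^{-1/2}(\bar{V}_1^{\ast}-m_1^{\ast})}{m_1^{\ast}}\Big) + r_n^{\ast}, \]
where $r_n^{\ast}$ equals $\Delta_n^{-1/2}$ times a quantity quadratic in $\bar{V}_\upsilon^{\ast}-m_\upsilon^{\ast}$ and inversely quadratic in $m_\upsilon^{\ast}$. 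Since Theorem \ref{TheorJointCLT-a} gives $\bar{V}_\upsilon^{\ast}-m_\upsilon^{\ast}=O_{p^{\ast}}(\Delta_n^{1/2})$, and since $m_\upsilon^{\ast}=|\widehat{\sigma}(p,\upsilon)_t^n|^p$ is bounded away from zero in prob-$\mathbb{P}$ by (\ref{BN et al -Consistency}) together with (\ref{Sigma-hat-1})--(\ref{Sigma-hat-2}), the remainder satisfies $r_n^{\ast}=O_{p^{\ast}}(\Delta_n^{1/2})=o_{p^{\ast}}(1)$ in prob-$\mathbb{P}$. The differentiability of $h_p$ at the relevant point is legitimate because $\bar{V}_2^{\ast}/\bar{V}_1^{\ast} \overset{\mathbb{P}^{\ast}}{\rightarrow} 1$, established above, keeps the argument of $h_p$ in a neighbourhood of a positive constant.

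Third, I would rewrite the studentized statistic as a unit linear functional of the already-standardized bootstrap vector. A direct term-by-term comparison of the variance estimator (\ref{Var-Boot-Frac-Brownian-Hat}) with the entries of $\widehat{\Sigma}^{\ast}(X,B^H;p)_t^n$ in (\ref{Boot-New-Assymp-Var-hat}), recalling $|\widehat{\sigma}(p,\upsilon)_t^{n\ast}|^p=\bar{V}_\upsilon^{\ast}$ from (\ref{Sigma-hat-1-star}) and $m_\upsilon^{\ast}=\mathbb{E}^{\ast}(\bar{V}_\upsilon^{\ast})$, shows that $\widehat{V}^{\ast}(\widehat{\alpha})_t = D_n\,\widehat{\Sigma}^{\ast}(X,B^H;p)_t^n\,D_n^T$ with the row vector $D_n=\frac{1}{p\log 2}\big(-\frac{1}{m_1^{\ast}},\ \frac{1}{m_2^{\ast}}\big)$. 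Using the definition of $\widehat{\mathbf{S}}_n^{\ast}$ in Theorem \ref{TheorJointCLT-a}, one has $\Delta_n^{-1/2}(\bar{V}_1^{\ast}-m_1^{\ast},\ \bar{V}_2^{\ast}-m_2^{\ast})^T = (\widehat{\Sigma}^{\ast})^{1/2}\widehat{\mathbf{S}}_n^{\ast}$, so that up to the negligible remainder
\[ T_{\widehat{\alpha},n}^{\ast} = \frac{D_n\,(\widehat{\Sigma}^{\ast})^{1/2}\,\widehat{\mathbf{S}}_n^{\ast}}{\sqrt{D_n\,\widehat{\Sigma}^{\ast}(X,B^H;p)_t^n\,D_n^T}} + o_{p^{\ast}}(1) = u_n^T\,\widehat{\mathbf{S}}_n^{\ast} + o_{p^{\ast}}(1), \]
where $u_n=(\widehat{\Sigma}^{\ast})^{1/2}D_n^T/\sqrt{D_n\widehat{\Sigma}^{\ast}D_n^T}$ is an $\mathcal{F}$-measurable unit vector. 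Since $\widehat{\mathbf{S}}_n^{\ast}\overset{d^{\ast}}{\rightarrow}N(0,I_2)$ in prob-$\mathbb{P}$ and $\|u_n\|=1$, a characteristic-function argument (pointwise convergence of the conditional characteristic functions to $\exp(-\|\xi\|^2/2)$ is locally uniform in $\xi$, so it survives evaluation at the data-dependent direction $\xi=s\,u_n$) yields $u_n^T\widehat{\mathbf{S}}_n^{\ast}\overset{d^{\ast}}{\rightarrow}N(0,1)$ in prob-$\mathbb{P}$. Note this route avoids proving a separate consistency statement $\widehat{\Sigma}^{\ast}\to\Sigma^{\ast}$, as the standardization is already built into Theorem \ref{TheorJointCLT-a}.

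The hard part will be executing the last two steps with full rigour in the ``$d^{\ast}$ in prob-$\mathbb{P}$'' mode of convergence: every limit holds only along $\mathbb{P}$-a.s.\ subsequences or on events of $\mathbb{P}$-probability tending to one, so one must combine the $d^{\ast}$-convergence of the linear term, the $o_{p^{\ast}}(1)$-negligibility of the Taylor remainder, and the data-dependence of the direction $u_n$ without leaving the conditional mode. A subsidiary but essential point is that the denominator stays bounded away from zero, i.e.\ $D_n\widehat{\Sigma}^{\ast}D_n^T>0$; since $\widehat{\Sigma}^{\ast}$ factors as $\mathrm{diag}(\bar{V}_1^{\ast}/\overline{\mu}(p,1)_t^n,\ \bar{V}_2^{\ast}/\overline{\mu}(p,2)_t^n)\,(\lambda_{p,n}^{ij})\,\mathrm{diag}(\cdots)$ with strictly positive diagonal factors, this reduces to the non-degeneracy of the matrix $(\lambda_{p,n}^{ij})$ in the direction $D_n$, i.e.\ to $(-1,1)\Lambda_p(-1,1)^T\neq 0$ — precisely the condition underlying the feasibility of the limiting CLT (\ref{CLT-Alpha-Test}).
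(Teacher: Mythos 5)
Your proposal is correct and takes essentially the same route as the paper: the paper's proof of this theorem is a one-line appeal to the delta method applied to Theorem \ref{TheorJointCLT-a}, and your argument is simply a careful execution of exactly that delta method (logarithmic linearization plus the identity $\widehat{V}^{\ast}(\widehat{\alpha})_t = D_n\widehat{\Sigma}^{\ast}D_n^T$). The only minor imprecision is calling $u_n$ an $\mathcal{F}$-measurable direction — it also depends on the bootstrap draw through $\bar{V}^{\ast}_\upsilon$ — but since $\bar{V}^{\ast}_\upsilon/\mathbb{E}^{\ast}(\bar{V}^{\ast}_\upsilon)\overset{\mathbb{P}^{\ast}}{\rightarrow}1$ in prob-$\mathbb{P}$, $u_n$ is asymptotically equivalent to an $\mathcal{F}$-measurable unit vector and the characteristic-function argument goes through unchanged.
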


\begin{remark}\label{rem:bootFeasible}
Note that the bootstrap statistic $T_{\widehat{\alpha },n}^{\ast }$ is
feasible: it is only a function of the original sample of the observed data $%
\left\{X_{i\Delta_n}\right\} ,$ the fractional Brownian
motion generated in Step 2, $\left\{ B^H_{i\Delta_n}\right\}$, and their absolute moments $\left\{ E\left\vert B^H_{i\Delta_n} - 2B^H_{(i-\upsilon)\Delta_n} + B^H_{(i-2\upsilon)\Delta_n} \right\vert ^{p}\right\} $. To see this, write
\begin{equation*}
\widehat{\alpha }^{\ast }\left( p\right)_t^n=h_{p}\left(
COF^{\ast }\left( p\right)^n_{t}\right) ,
\end{equation*}%
where $h_{p}\left( \cdot \right) $ and $COF^{\ast }\left( p\right)^n_{t}$ are given in (\ref{function-h}) and (\ref%
{function-COF-Boot-a}), respectively. Given (\ref{function-COF-Boot-a}) and (%
\ref{Boot-New-Power-Var}), it follows that%
\begin{eqnarray}
COF^{\ast }\left( p\right)^n_{t} &=&\left( \frac{\tau
_{n}\left( 2\right) ^{2}}{\tau _{n}\left(1\right) ^{2}}%
\right) ^{p/2}\frac{\bar{V}^{\ast }\left( X,B^{H};p,2\right)^n_{t}}{\bar{V}^{\ast }\left( X,B^{H};p,1\right)^n_{t}}  \notag \\
&=&\left\{
\begin{array}{lcl}
\frac{\mu(p,1)_t^n}{\mu(p,2)_t^n}\frac{V\left( B^{H};p,2\right)^n_{t}}{V\left( B^{H};p,1\right)^n_{t}}\frac{V\left(
X;p,2\right)^n_{t}}{V\left( X;p,1\right)^n_{t}}, &
\text{if} & \widehat{\sigma}(p,\upsilon)_t^n \text{ is given by }(\ref%
{Sigma-hat-1}), \\
\frac{\mu(p,1)_t^n}{\mu(p,2)_t^n}\frac{V\left( B^{H};p,2\right)^n_{t}}{V\left( B^{H};p,1\right)^n_{t}}\left( \frac{%
V\left( X;\beta p,2\right)^n_{t}}{V\left( X;\beta p,1\right)^n_{t}}\right) ^{1/\beta }, & \text{if} &\widehat{\sigma}(p,\upsilon)_t^n \text{ is given by }(\ref{Sigma-hat-2}).%
\end{array}%
\right.  \label{function-COF-Boot-a-feasible}
\end{eqnarray}%
Similarly, when  $\widehat{\sigma}(p,\upsilon)_t^n$ is given by $(\ref%
{Sigma-hat-1})$ or $(\ref{Sigma-hat-2})$, we can write $\widehat{V}^{\ast }\left( \widehat{\alpha }\right) _{t}$ given in Theorem \ref{Theor-Alpha-hat-boot-a} through%
\begin{equation*}
\widehat{V}^{\ast }\left( \widehat{\alpha }\right) _{t}=\frac{1}{\left( {%
p\log }\left( 2\right) \right) ^{2}}\widehat{\varsigma }^{\ast }\left(
X,B^{H};p\right)^n_{t},
\end{equation*}%
with%
\begin{equation*}
\widehat{\varsigma }^{\ast }\left( X,B^{H};p\right)^n_{t}=A+B+C, 
\end{equation*}%
where
\begin{eqnarray*}
A&=&\Delta _{n}^{-1}\left( \mu(p,1)_t^n\right) ^{-4}\left( V\left(
B^{H};p,1\right)^n_{t}\right) ^{2}Var\left( V\left(
B^{H};p,1\right)^n_{t}\right), \\
B&=&\Delta _{n}^{-1}\left( \mu(p,2)_t^n\right) ^{-4}\left( V\left(
B^{H};p,2\right)^n_{t}\right) ^{2}Var\left( V\left(
B^{H};p,2\right)^n_{t}\right), \\
C&=&C_{1}\cdot C_{2},
\end{eqnarray*}%
and
\begin{eqnarray*}
C_{1} &=&-2\left( \mu(p,1)_t^n\right) ^{-2}\left(\mu(p,2)_t^n\right) ^{-2} V\left( B^{H};p,1\right)^n_{t} V\left( B^{H};p,2\right)^n_{t}, \\
C_{2} &=&\Delta _{n}^{-1}Cov\left( V\left( B^{H};p,1\right)^n_{t},V\left( B^{H};p,2\right)^n_{t}\right) .
\end{eqnarray*}
Expressions for $Var\left( V\left(B^{H};p,\upsilon \right)^n_{t}\right)$, $Cov\left( V\left( B^{H};p,1\right)^n_{t},V\left( B^{H};p,2\right)^n_{t}\right)$ and $\mu(p,\upsilon)$ for $p=2$ can be found in Appendix \ref{app:derive}.

\end{remark}

\subsection{Bootstrap implementation}

We can use the bootstrap method proposed above to test \mikko{hypotheses on the roughness of} the \mikko{sample} paths of a $\BSS$ process. Consider the following, where the null hypothesis is\ $H_{0}:\alpha =\alpha _{0}$ \mikko{for some $\alpha _{0}\in \left(- \frac{1}{2},\frac{1}{2}\right)$,}  whereas the alternative hypothesis is $H_{1}:\alpha \neq \alpha _{0}$. We let $p=2$. For a given time period $\left[ 0,t\right] $ with step size $\Delta _{n}=\frac{t}{n}$ we suppose we have $n+1\in \mathbb{N}$ observations $\mikko{\mathbb{X}} = (X_0, X_{\Delta_n},\ldots,X_{n\Delta_n})$ of a $\BSS$ process. Below, $B$ is the number of bootstrap replications (e.g. $B=999$).

\begin{description}
\item[Algorithm for \mikko{hypothesis testing using} the Local Fractional Bootstrap]
\end{description}

\begin{description}
\item[1.] From the data $\mikko{\mathbb{X}}$, compute the estimate of the \mikko{roughness parameter} $\alpha$ given by
\begin{equation*}
\widehat{\alpha }\left(2\right)_t^n=h_{2}\left( COF\left(
2\right)^n_{t}\right),
\end{equation*}%
where $h_{p}\left( \cdot \right) $ and $COF\left( 2\right)^n_{t}$
are given in (\ref{function-h}) and (\ref{function-COF}), respectively.
Then, compute an estimator of the asymptotic variance $V\left( \widehat{%
\alpha }\right)^n_{t}=\underset{n\rightarrow \infty }{\lim }Var\left(
\widehat{\alpha }\left( 2\right)^n_{t}\right) ,$ given by%
\begin{equation*}
\widehat{V}\left( \widehat{\alpha }\right) _{t}=n\frac{%
m_{2p}^{-1}V\left( X;4,1\right)^n_{t}\left( -1,1\right) \Lambda
_{2}\left( -1,1\right) ^{T}}{\left( {2\log }\left( 2\right) V\left(
X;2,1\right)^n_{t}\right) ^{2}}.
\end{equation*}

\item[2.] Simulate $n+1$ \mikko{observations} $B_{0}^{H},B_{\Delta_n}^{H},\ldots ,B_{n\Delta_n}^{H}$ of a
fractional Brownian motion with Hurst parameter $H=\alpha _{0}+1/2$ \mikko{that are independent of the data $\mikko{\mathbb{X}}$.}

\item[3.] Using the simulated sample $(B_{0}^{H},B_{1}^{H},\ldots ,B_{n}^{H})$, compute the estimate of the bootstrap \mikko{roughness parameter} $\widehat{%
\alpha }^{\ast }\left( 2\right)^n_{t},$ given by
\begin{equation*}
\widehat{\alpha }^{\ast }\left( 2\right)^n_{t}=h_{2}\left(
COF^{\ast }\left( 2\right)^n_{t}\right) ,
\end{equation*}%
where $h_{p}\left( \cdot \right) $ and $COF^{\ast }\left( p\right)^n_{t}$ are given in (\ref{function-h}) and (\ref%
{function-COF-Boot-a-feasible}), respectively.

\item[4.] \mikko{The actual test relies} on the
bootstrap studentized statistic. Thus, compute%
\begin{equation*}
\text{ }T_{\widehat{\alpha },n}^{\ast }=\Delta _{n}^{-1/2}\frac{\left(
\widehat{\alpha }^{\ast }\left( 2\right)^n_{t}-\widetilde{%
\alpha }\left( 2\right)^n_{t}\right) }{\sqrt{\widehat{V}%
^{\ast }\left( \widehat{\alpha }\right) _{t}}},
\end{equation*}%
where $\widetilde{\alpha }\left( 2\right)^n_{t}$ is given by (%
\ref{alpha-tilde}), whereas $\widehat{\alpha }^{\ast }\left( 2\right)^n_{t}$ is obtained in step 3, and $\widehat{V}^{\ast }\left(
\widehat{\alpha }\right) _{t}$ is defined in (\ref%
{Var-Boot-Frac-Brownian-Hat}).

\item[5.] Repeat steps \mikko{2--4} $B$ times and \mikko{store} the values of $T_{\widehat{%
\alpha },n,j}^{\ast }$, $j=1,\ldots ,B.$

\item[6.] Reject $H_{0}:\alpha =\alpha _{0},$ when
\begin{equation*}
\alpha _{0}\notin IC_{perc-t,1-\gamma }^{\ast }=\left[ \widehat{\alpha }%
\left( 2\right)^n_{t}-n^{-1/2}q_{1-\gamma /2}^{\ast }\sqrt{%
\widehat{V}\left( \widehat{\alpha }\right) _{t}},\widehat{\alpha }%
\left( 2\right)^n_{t}-n^{-1/2}q_{\gamma /2}^{\ast }\sqrt{%
\widehat{V}\left( \widehat{\alpha }\right) _{t}}\right] ,
\end{equation*}%
where $q_{\gamma /2}^{\ast }$ and $q_{1-\gamma /2}^{\ast }$ are the $\gamma
/2$ and $1-\gamma /2$ quantiles of the bootstrap distribution of $%
T_{n}^{\ast },$ respectively.
\end{description}

\section{Monte Carlo simulation study}

\label{sec:MC}

In this section, we evaluate the finite sample performance of the \mikko{test based on} local fractional bootstrap and compare \mikko{it to the performance of the CLT-based test}. In our simulations we take $g$ to be the gamma kernel, i.e. $%
g(x)=x^{\alpha }e^{-\lambda x}$ for $x>0$ and with $\lambda =1$. For an
in-depth analysis of the \mikko{theoretical properties of the} gamma kernel, see \cite{BN12,BN16a}.  We consider $\alpha  \in \{-1/3, -1/6, 0, 1/6, 1/3\}$\mikko{; recall that the CLT for the COF estimator does not hold when when $\alpha \in [1/4,1/2)$.} 
We experimented with \mikko{several} different values of $\lambda $ and $\alpha$, but the results were \mikko{in all cases} very similar to what we find below. We consider three specifications for the stochastic volatility process:
\mikko{\begin{itemize}
\item constant volatility (NoSV),
\item one-factor stochastic volatility (SV1F),
\item two-factor stochastic volatility (SV2F).
\end{itemize}}
The details of these specifications, along with the simulation procedure, are explained in Appendix \ref{sec:sim}. Our
investigations concern the finite sample size of the test $H_{0}:\alpha =\alpha _{0}$
against the \mikko{two}-sided alternative $H_{1}:\alpha \neq \alpha _{0}$ \mikko{at
(nominal) $5\%$ level}. We also calculate
the \emph{size-adjusted power} of the test \mikko{at $5\%$ level} where the \mikko{(now false)} null hypothesis is $H_{0}:\alpha =0$.\footnote{The size-adjusted power was obtained in the following way: Using Monte Carlo simulations we found critical values that would result in the CLT obtaining the same size as the bootstrap (see Table \ref{tab:size} for these size numbers); these critical values were then used to determine the power of the CLT-based test. As the bootstrap test seems to be correctly sized for all $n$ this was not size-corrected. By using the actual size of the bootstrap test (instead of the nominal $5\%$) to calculate the size-adjusted critical value for the CLT-based test, the power properties of the two tests become comparable.}
\mikko{The null hypothesis $H_{0}:\alpha =0$}
is particularly interesting as \mikko{$\alpha =0$} is a necessary condition for \mikko{the semimartingality of $X$}. Further, in the case of the gamma kernel $g(x)=x^{\alpha }e^{-\lambda x}$ that we consider here, $\alpha =0$ implies that the $\BSS$ process actually is an Ornstein-Uhlenbeck process.

Tables \ref{tab:size} and \ref{tab:power} contain the results of our Monte Carlo study and detail the finite sample properties of both the CLT and the local fractional bootstrap. Table \ref{tab:size} \mikko{presents} rejection rates of $H_0$ when $H_0$ is true (i.e. the size), while Table \ref{tab:power} \mikko{displays} rejection rates of $H_0$ when $H_1$ is true (i.e. the power). Some clear conclusions \mikko{can be drawn}. \mikko{Firstly}, the bootstrap method offers clear gains in the size of the test when the number of observations, $n$, is small. Secondly, the power of the CLT is slightly better for $\alpha < 0$, while the opposite is true for $\alpha \geq 0$. Finally, the presence or absence of SV does not alter results very much, except in the case of SV2F where the methods lose some power.

\begin{table}
\caption{\it Rejection rates under $H_0$}
\begin{center}
\begin{tabularx}{.95\textwidth}{@{\extracolsep{\stretch{1}}}lcccccccccc@{}} 
\multicolumn{11}{l}{Panel A: NoSV} \\
\toprule
$n$  & \multicolumn{2}{c}{$\alpha = -1/3$} & \multicolumn{2}{c}{$\alpha = -1/6$} & \multicolumn{2}{c}{$\alpha = 0$} & \multicolumn{2}{c}{$\alpha = 1/6$} & \multicolumn{2}{c}{$\alpha = 1/3$} \\ 
\cmidrule{2-11}
 & CLT & boot & CLT & boot & CLT & boot & CLT & boot & CLT & boot \\ 
 $ 20 $ & $ 0.0968 $ & $ 0.0470 $ & $ 0.0950 $ & $ 0.0454 $ & $ 0.0968 $ & $ 0.0354 $ & $ 0.1044 $ & $ 0.0478 $ & $ 0.1110 $ & $ 0.0456 $\\
 $ 40 $ & $ 0.0742 $ & $ 0.0534 $ & $ 0.0728 $ & $ 0.0540 $ & $ 0.0754 $ & $ 0.0488 $ & $ 0.0746 $ & $ 0.0584 $ & $ 0.0852 $ & $ 0.0558 $\\
 $ 80 $ & $ 0.0642 $ & $ 0.0568 $ & $ 0.0562 $ & $ 0.0512 $ & $ 0.0644 $ & $ 0.0550 $ & $ 0.0638 $ & $ 0.0526 $ & $ 0.0726 $ & $ 0.0610 $\\
 $ 160 $ & $ 0.0620 $ & $ 0.0596 $ & $ 0.0638 $ & $ 0.0598 $ & $ 0.0536 $ & $ 0.0514 $ & $ 0.0576 $ & $ 0.0558 $ & $ 0.0572 $ & $ 0.0528 $\\
 $ 320 $ & $ 0.0568 $ & $ 0.0556 $ & $ 0.0540 $ & $ 0.0530 $ & $ 0.0562 $ & $ 0.0526 $ & $ 0.0610 $ & $ 0.0582 $ & $ 0.0548 $ & $ 0.0516 $\\
\bottomrule 
\end{tabularx}
\begin{tabularx}{.95\textwidth}{@{\extracolsep{\stretch{1}}}lcccccccccc@{}} 
\multicolumn{11}{l}{Panel B: SV1F} \\
\toprule
$n$  & \multicolumn{2}{c}{$\alpha = -1/3$} & \multicolumn{2}{c}{$\alpha = -1/6$} & \multicolumn{2}{c}{$\alpha = 0$} & \multicolumn{2}{c}{$\alpha = 1/6$} & \multicolumn{2}{c}{$\alpha = 1/3$} \\ 
\cmidrule{2-11}
 & CLT & boot & CLT & boot & CLT & boot & CLT & boot & CLT & boot \\ 
 $ 20 $ & $ 0.1012 $ & $ 0.0466 $ & $ 0.1124 $ & $ 0.0506 $ & $ 0.1052 $ & $ 0.0380 $ & $ 0.1050 $ & $ 0.0466 $ & $ 0.1222 $ & $ 0.0554 $\\
 $ 40 $ & $ 0.0740 $ & $ 0.0542 $ & $ 0.0796 $ & $ 0.0550 $ & $ 0.0772 $ & $ 0.0464 $ & $ 0.0796 $ & $ 0.0552 $ & $ 0.0872 $ & $ 0.0612 $\\
 $ 80 $ & $ 0.0580 $ & $ 0.0526 $ & $ 0.0588 $ & $ 0.0512 $ & $ 0.0652 $ & $ 0.0526 $ & $ 0.0658 $ & $ 0.0552 $ & $ 0.0686 $ & $ 0.0610 $\\
 $ 160 $ & $ 0.0614 $ & $ 0.0596 $ & $ 0.0546 $ & $ 0.0506 $ & $ 0.0584 $ & $ 0.0558 $ & $ 0.0622 $ & $ 0.0572 $ & $ 0.0660 $ & $ 0.0590 $\\
 $ 320 $ & $ 0.0564 $ & $ 0.0558 $ & $ 0.0508 $ & $ 0.0546 $ & $ 0.0502 $ & $ 0.0492 $ & $ 0.0532 $ & $ 0.0536 $ & $ 0.0528 $ & $ 0.0512 $\\
\bottomrule 
\end{tabularx}
\begin{tabularx}{.95\textwidth}{@{\extracolsep{\stretch{1}}}lcccccccccc@{}} 
\multicolumn{11}{l}{Panel C: SV2F} \\
\toprule
$n$  & \multicolumn{2}{c}{$\alpha = -1/3$} & \multicolumn{2}{c}{$\alpha = -1/6$} & \multicolumn{2}{c}{$\alpha = 0$} & \multicolumn{2}{c}{$\alpha = 1/6$} & \multicolumn{2}{c}{$\alpha = 1/3$} \\ 
\cmidrule{2-11}
 & CLT & boot & CLT & boot & CLT & boot & CLT & boot & CLT & boot \\ 
 $ 20 $ & $ 0.0918 $ & $ 0.0400 $ & $ 0.0962 $ & $ 0.0470 $ & $ 0.0986 $ & $ 0.0340 $ & $ 0.1092 $ & $ 0.0516 $ & $ 0.1458 $ & $ 0.0534 $\\
 $ 40 $ & $ 0.0734 $ & $ 0.0548 $ & $ 0.0638 $ & $ 0.0504 $ & $ 0.0748 $ & $ 0.0516 $ & $ 0.0822 $ & $ 0.0670 $ & $ 0.1056 $ & $ 0.0634 $\\
 $ 80 $ & $ 0.0616 $ & $ 0.0584 $ & $ 0.0656 $ & $ 0.0624 $ & $ 0.0686 $ & $ 0.0572 $ & $ 0.0678 $ & $ 0.0620 $ & $ 0.0844 $ & $ 0.0632 $\\
 $ 160 $ & $ 0.0558 $ & $ 0.0544 $ & $ 0.0612 $ & $ 0.0594 $ & $ 0.0572 $ & $ 0.0550 $ & $ 0.0630 $ & $ 0.0606 $ & $ 0.0650 $ & $ 0.0602 $\\
 $ 320 $ & $ 0.0562 $ & $ 0.0562 $ & $ 0.0572 $ & $ 0.0554 $ & $ 0.0512 $ & $ 0.0508 $ & $ 0.0550 $ & $ 0.0574 $ & $ 0.0624 $ & $ 0.0602 $\\
\bottomrule 
\end{tabularx}
\end{center}
{\it Simulation study of the finite sample properties of the test $H_0: \alpha = \alpha_0$ against the alternative $H_1: \alpha \neq \alpha_0$, using the CLT and the local fractional bootstrap. The simulations are done under $H_0$, i.e. we consider the size of the tests. The nominal size is $5\%$ and the numbers shown are the rejection rates of $H_0$ over $5\,000$ Monte Carlo simulations, each with $B= 999$ bootstrap replications. We set $p = 2$ and $\lambda = 1$.} 
\label{tab:size}
\end{table}

\begin{table}
\caption{\it Rejection rates under $H_1$}
\begin{center}
\begin{tabularx}{.95\textwidth}{@{\extracolsep{\stretch{1}}}lcccccccc@{}} 
\multicolumn{9}{l}{Panel A: NoSV} \\
\toprule
$n$  & \multicolumn{2}{c}{$\alpha = -1/3$} & \multicolumn{2}{c}{$\alpha = -1/6$} & \multicolumn{2}{c}{$\alpha = 1/6$} & \multicolumn{2}{c}{$\alpha = 1/3$} \\ 
\cmidrule{2-9}
 & CLT & boot & CLT & boot & CLT & boot & CLT & boot \\ 
 $ 20 $ & $ 0.2308 $ & $ 0.1566 $ & $ 0.1010 $ & $ 0.0708 $  & $ 0.0244 $ & $ 0.0510 $ & $ 0.0284 $ & $ 0.1118 $\\
 $ 40 $ & $ 0.3792 $ & $ 0.2886 $ & $ 0.1646 $ & $ 0.1148 $  & $ 0.0572 $ & $ 0.1030 $ & $ 0.1820 $ & $ 0.3146 $\\
 $ 80 $ & $ 0.6168 $ & $ 0.5428 $ & $ 0.2356 $ & $ 0.1848 $ & $ 0.1314 $ & $ 0.1994 $ & $ 0.5342 $ & $ 0.6212 $\\
 $ 160 $ & $ 0.8688 $ & $ 0.8248 $ & $ 0.3874 $ & $ 0.3228 $ & $ 0.2990 $ & $ 0.3526 $ & $ 0.8660 $ & $ 0.8998 $\\
 $ 320 $ & $ 0.9918 $ & $ 0.9852 $ & $ 0.6160 $ & $ 0.5654 $ & $ 0.5722 $ & $ 0.6074 $ & $ 0.9920 $ & $ 0.9964 $\\
\bottomrule 
\end{tabularx}
\begin{tabularx}{.95\textwidth}{@{\extracolsep{\stretch{1}}}lcccccccc@{}} 
\multicolumn{9}{l}{Panel B: SV1F} \\
\toprule
$n$  & \multicolumn{2}{c}{$\alpha = -1/3$} & \multicolumn{2}{c}{$\alpha = -1/6$} & \multicolumn{2}{c}{$\alpha = 1/6$} & \multicolumn{2}{c}{$\alpha = 1/3$} \\ 
\cmidrule{2-9}
 & CLT & boot & CLT & boot & CLT & boot & CLT & boot \\ 
 $ 20 $ & $ 0.2182 $ & $ 0.1536 $ & $ 0.1162 $ & $ 0.0710 $  & $ 0.0214 $ & $ 0.0472 $ & $ 0.0352 $ & $ 0.1084 $\\
 $ 40 $ & $ 0.3812 $ & $ 0.2914 $ & $ 0.1644 $ & $ 0.1108 $ & $ 0.0530 $ & $ 0.0986 $ & $ 0.1850 $ & $ 0.3052 $\\
 $ 80 $ & $ 0.5834 $ & $ 0.5350 $ & $ 0.2204 $ & $ 0.1958 $ & $ 0.1376 $ & $ 0.1880 $ & $ 0.5300 $ & $ 0.6126 $\\
 $ 160 $ & $ 0.8692 $ & $ 0.8302 $ & $ 0.3652 $ & $ 0.3242 $ & $ 0.2888 $ & $ 0.3458 $ & $ 0.8674 $ & $ 0.9020 $\\
 $ 320 $ & $ 0.9890 $ & $ 0.9874 $ & $ 0.6270 $ & $ 0.5672 $ & $ 0.5598 $ & $ 0.6092 $ & $ 0.9928 $ & $ 0.9948 $\\
\bottomrule 
\end{tabularx}
\begin{tabularx}{.95\textwidth}{@{\extracolsep{\stretch{1}}}lcccccccc@{}} 
\multicolumn{9}{l}{Panel C: SV2F} \\
\toprule
$n$  & \multicolumn{2}{c}{$\alpha = -1/3$} & \multicolumn{2}{c}{$\alpha = -1/6$} & \multicolumn{2}{c}{$\alpha = 1/6$} & \multicolumn{2}{c}{$\alpha = 1/3$} \\ 
\cmidrule{2-9}
 & CLT & boot & CLT & boot & CLT & boot & CLT & boot \\ 
 $ 20 $ & $ 0.1706 $ & $ 0.1166 $ & $ 0.0966 $ & $ 0.0628 $ & $ 0.0258 $ & $ 0.0450 $ & $ 0.0136 $ & $ 0.0686 $\\
 $ 40 $ & $ 0.2834 $ & $ 0.2238 $ & $ 0.1250 $ & $ 0.0988 $ & $ 0.0678 $ & $ 0.0948 $ & $ 0.0928 $ & $ 0.1942 $\\
 $ 80 $ & $ 0.4332 $ & $ 0.3598 $ & $ 0.1814 $ & $ 0.1312 $ & $ 0.1140 $ & $ 0.1446 $ & $ 0.2716 $ & $ 0.3740 $\\
 $ 160 $ & $ 0.6252 $ & $ 0.5890 $ & $ 0.2486 $ & $ 0.1960 $  & $ 0.1998 $ & $ 0.2338 $ & $ 0.5582 $ & $ 0.6262 $\\
 $ 320 $ & $ 0.8676 $ & $ 0.8396 $ & $ 0.3980 $ & $ 0.3468 $  & $ 0.3362 $ & $ 0.4008 $ & $ 0.8320 $ & $ 0.8620 $\\
\bottomrule 
\end{tabularx}
\end{center}
{\it Simulation study of the finite sample properties of the test $H_0: \alpha = 0$ against the alternative $H_1: \alpha \neq 0$, using the CLT and the local fractional bootstrap. The simulations are done under the alternative, i.e. we consider the power of the tests, with the true value of $\alpha$ used in the simulations being the $\alpha$ indicated in the respective column. For the bootstrap, the nominal size is $5\%$, while the CLT has been size-adjusted as explained in the text: the critical value used is the critical value that results in the CLT having the same size as the bootstap test (given in Table \ref{tab:size}). The numbers shown are the rejection rates of $H_0$ over $5\,000$ Monte Carlo simulations, each with $B= 999$ bootstrap replications. We set $p = 2$ and $\lambda = 1$.} 
\label{tab:power}
\end{table}

\section{Empirical applications}
\label{sec:empirical}
In this section, we apply the local fractional bootstrap method \mikko{presented} above to two relevant empirical \mikko{data sets}. As we saw in the previous section, the bootstrap method is crucial for \mikko{achieving the correct empirical size of} the hypothesis test $H_0: \alpha = \alpha_0$, especially when the number of observations \mikko{$n$} is small. In both \mikko{of} our applications\mikko{,} \mikko{theoretical arguments suggest specific null hypotheses} to be true, and we examine how the CLT and bootstrap \mikko{fare} in confirming or rejecting these hypotheses.

\subsection{\mikko{High-frequency futures price data}}
Here, we consider testing $H_0: \alpha  = 0$ for the price of a financial asset, the E-mini \mikko{S\&P 500} futures contract. Note that\mikko{, at least theoretically,} on no-arbitrage grounds \citep[Theorem 7.2]{DS94} one would expect $H_0$ to be true, since $\alpha \neq 0$ implies that the $\BSS$ process is \mikko{not a semimartingale} \citep[e.g.][]{CHPP13,bennedsen16a}.

Our data consists of \mikko{high-frequency}\footnote{\mikko{The data has been recorded with one-second time stamp precision.}} observations of the E-mini \mikko{S\&P 500} futures contract\mikko{, traded on CME Globex electronic trading platform,} from January 2, 2013 until December 31, 2014\mikko{,} excluding weekends and holidays. This results in 516 trading days, 18 of which were not full trading days; we removed these to arrive at a total of 498 days in our sample. Although the E-mini \mikko{S\&P 500 contract} is traded almost around the clock\mikko{,} we restrict attention to the most liquid time period which is when the NYSE is open, i.e. to the $6.5$ hours from $9.30$ a.m.\ to $4$ p.m.\ EST. We \mikko{re}sample the price at different frequencies, $\Delta \in \{2, 5, 10, 15\}$ minutes, which results in $n = 196, 79, 40, 27$ daily prices, respectively. We then test $H_0: \alpha = 0$ for each day using both the CLT and the local fractional bootstrap against the \mikko{two-sided} alternative $H_1: \alpha \neq 0$. The \mikko{results,} averaged over the $498$ days in our sample, are presented in Figure \ref{fig:ES}. We see that when we sample the price \mikko{frequently} ($\Delta = 2$ minutes)\mikko{,} both methods \mikko{reject} often ($16.5\%$ and $14.1\%$, respectively) indicating that \mikko{the number of days when $H_0:\alpha = 0$ is rejected is significant}. This result \mikko{may seem surprising}, but is likely due to market microstructure (MMS) noise effects: \mikko{at very short time scales, high-frequency data often exhibits negative autocorrelations that are compatible with the alternative hypothesis $\alpha<0$.}

When sampling at lower frequencies, i.e. \mikko{$\Delta$ being at least 5 minutes}, one expects the MMS effects to be negligible and \mikko{rejections of $H_0$ should occur at the nominal rate}. Figure \ref{fig:ES} indeed shows that we reject $H_0$ less often in these cases; we also observe that the bootstrap method rejects less often than \mikko{the} CLT. Indeed, the bootstrap is closer to the nominal $5\%$ rejection rate which we would expect under \mikko{$H_0$}. In the \mikko{case} $\Delta = 15$ minutes\mikko{,} we \mikko{have only} $n = 27$ observations \mikko{per} day\mikkel{; considering the Monte Carlo evidence above, we expect the CLT to be oversized in this case. As seen in the figure, the CLT indeed rejects $H_0$ more often here ($11.4\%$) while the bootstrap essentially retains the nominal size, rejecting on $4.6\%$ of the days}. This is encouraging as any MMS effects \mikko{should be} negligible \mikko{at this time scale}.

\begin{figure}[!t] 
\centering 
\includegraphics[scale=0.83]{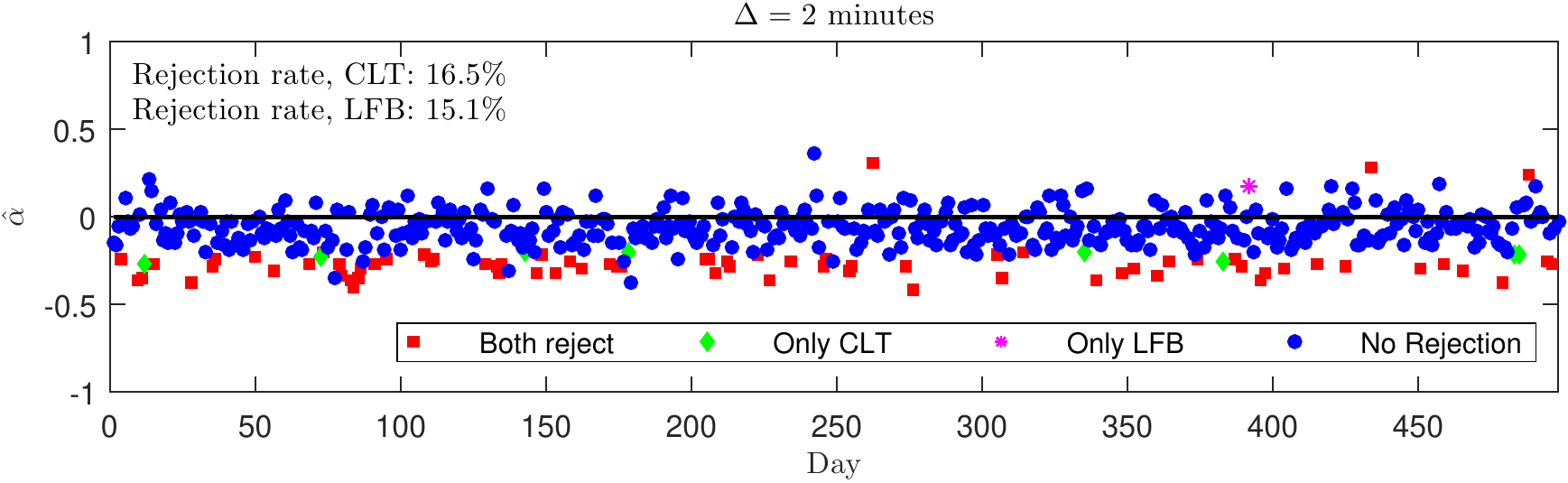} \\
\includegraphics[scale=0.83]{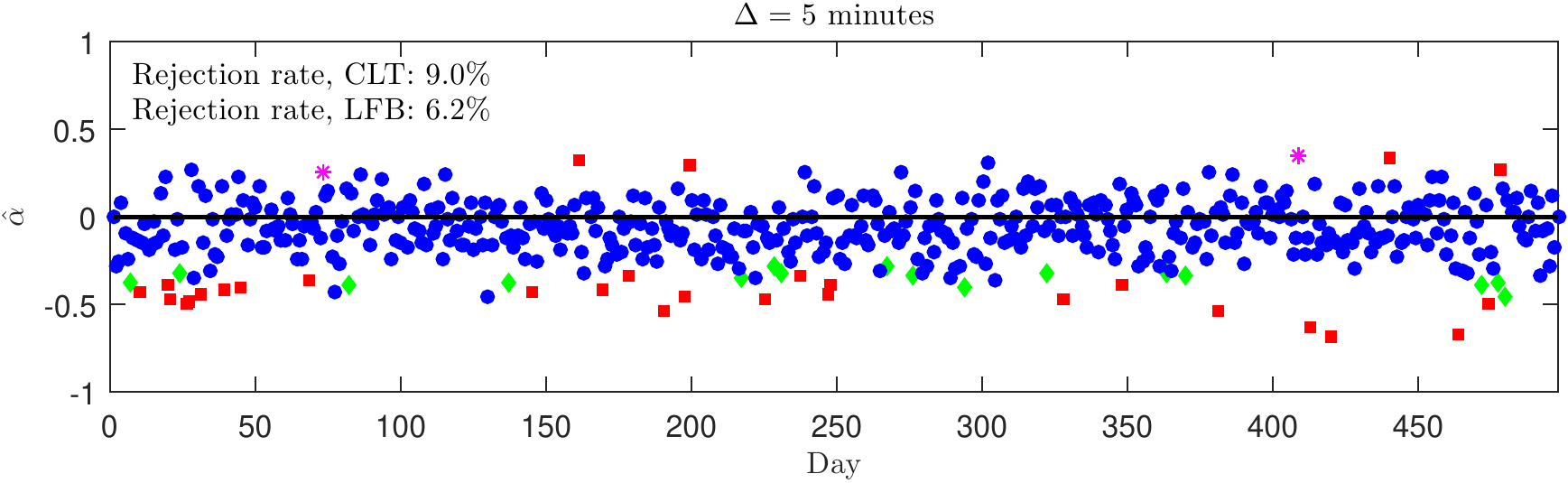} \\
\includegraphics[scale=0.83]{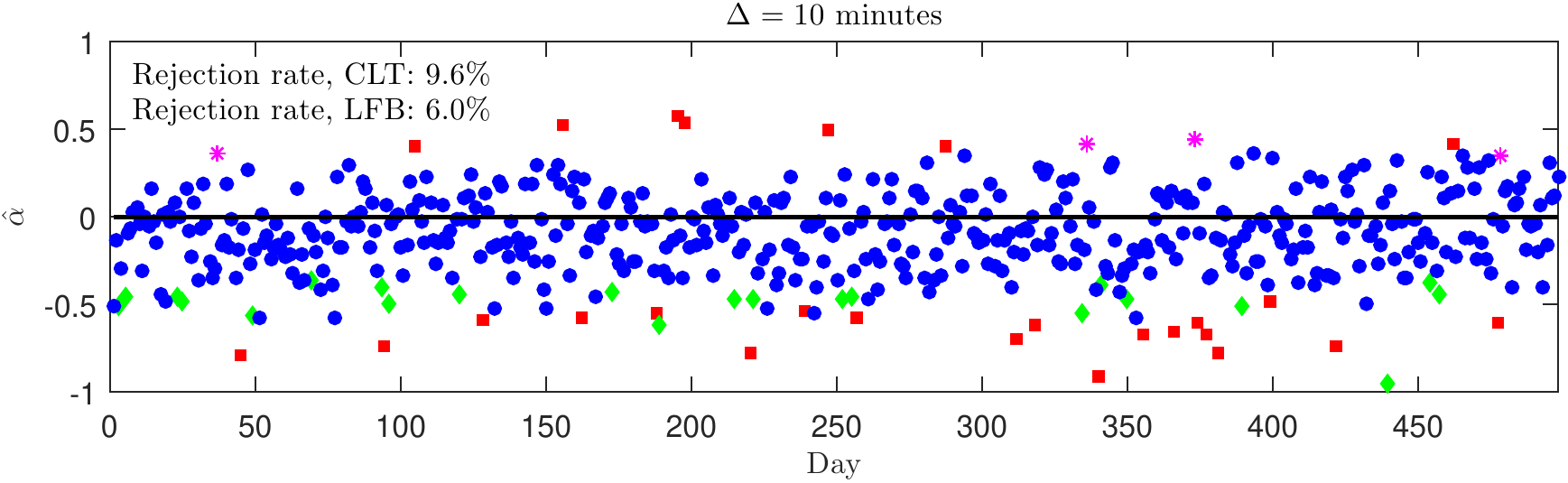} \\
\includegraphics[scale=0.83]{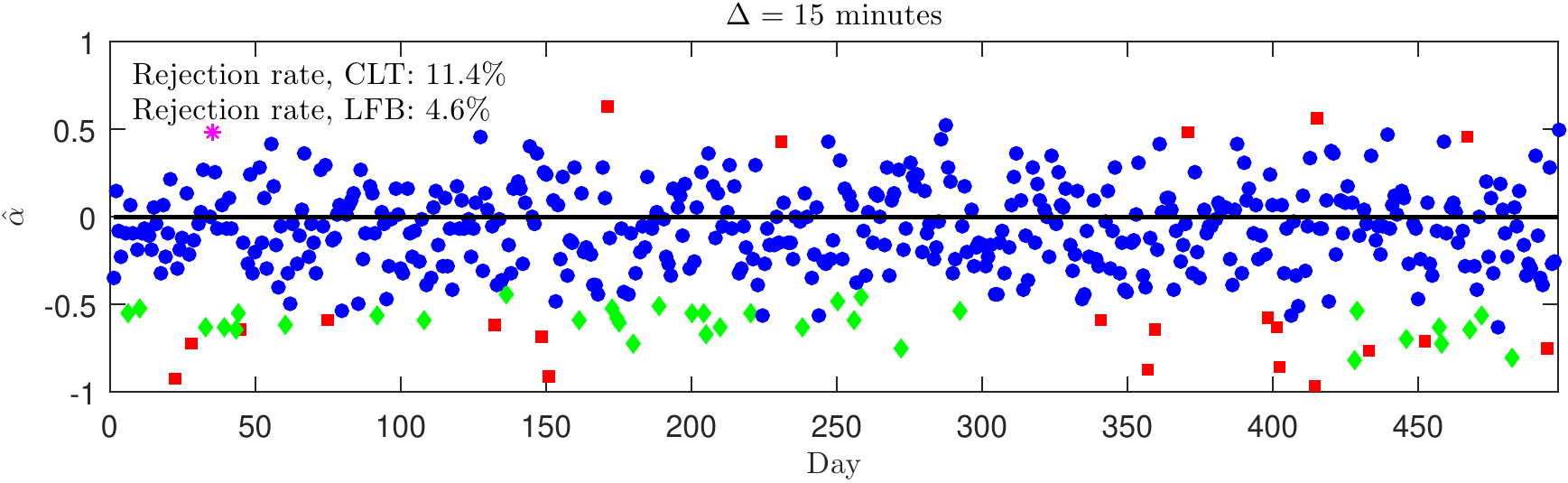}
\caption{\mikkel{Estimates of $\alpha$  and test of $H_0: \alpha = 0$ against $H_1: \alpha \neq 0$ from $498$  intraday time series of log-prices from the E-mini data set, sampled every $\Delta$ period. The plots depict the estimated value of $\alpha$ from a given day. The particular days where $H_0$ was rejected by both the CLT and LFB are shown by red squares; the days where only the CLT, but not the LFB, rejected are green diamonds; the days where only the LFB, but not the CLT, rejected are magenta asterisks; and the days where no method rejects $H_0$ are blue circles. We used $B = 999$ bootstrap replications and the black horizontal line indicates the null value $\alpha = 0$.}}
\label{fig:ES}
\end{figure}

\subsection{\mikko{Turbulence data}}

\mikko{In our second application, we study a time series of one-dimensional hot-wire anemometer measurements of the longitudinal component of a turbulent velocity field in the atmospheric boundary layer, measured $35$ meters above ground level.} The \mikko{time series} consists of $2 \times 10^7$ \mikko{observations}, sampled at a \mikko{rate} of $5$ kHz. In other words, \mikko{there are} $20$ million \mikko{observations}, measured over a period of $T = 4\,000$ seconds with $5\,000$ observations recorded \mikko{per} second. This time series was also studied in \cite{CHPP13} and \cite{BNPS14}, and we refer to \cite{dhruva00} for further details on how it was recorded.

\mikko{When timewise data on turbulence is modeled using a $\BSS$ process whose kernel function satisfies Assumption 1(i), Kolmogorov's $5/3$ scaling law \citep{kolmogorov41} for fully developed turbulence, assuming Taylor's frozen field hypothesis \citep{Taylor1938}, is compatible with the parameter value $\alpha = -1/6$ at intermediate time scales that correspond to the so-called \emph{inertial range}; see also \citet{MS16}. By analyzing the spectral density of the time series, for this data the inertial range was found to be approximately between $0.1$ Hz and $200$ Hz \cite[Section 5]{CHPP13}.}

\mikko{As in the previous application, we experiment with resampling the data at various frequencies $f$, varying $f$ to study the roughness properties of the time series at different time scales. More specifically, we vary $f$ between $1$ Hz and $200$ Hz to include time scales both firmly within and on the border of the inertial range. The time increment $\Delta$ used in resampling is related to $f$ by $\Delta = 1/f$. Motivated by Kolmogorov's scaling law, we formulate} $H_0: \alpha = -1/6$ \mikko{and test it against the two-sided alternative $H_1: \alpha \neq -1/6$. In our analysis, we divide the sample period (of $4\,000$ seconds) into $M=400$ sub-periods of 10 seconds. We conduct the test on each sub-period individually, treating them as separate measurements of the same phenomenon, which seems reasonable given the putative stationarity of the time series. Note that after resampling at frequency $f$, the number of observations covering each sub-period is $n=10f$.}

\mikko{Figure \ref{fig:brook} presents results on} the rejection rate of $H_0$, which is \mikko{the relative frequency} of rejections over the $M = 400$ \mikko{sub-periods}. As expected\mikko{, $H_0$ is often rejected} when \mikkel{the sampling rate is on the border of the inertial range (cf. the results for $f = 200$ Hz). When firmly inside the inertial range ($f = 20$ Hz), the null hypothesis is rejected in roughly $4\%$ of the sub-periods for both methods. At these sampling frequencies, the CLT- and bootstrap-based tests} largely agree; this is as expected since there are plenty of observations. 

\mikko{The results change} as the sampling frequency \mikkel{is lowered, resulting in fewer observations. Indeed, we see that the CLT-based test yields rejection rates of $10.8\%$ and $17.3\%$ at sampling frequencies $1$ and $2$ Hz, respectively}, while the \mikko{bootstrap-based test} rejects \mikko{roughly} at the nominal $5\%$ \mikko{rate,} as we would expect from a correctly\mikko{-}sized test when $H_0$ is true. \mikko{As seen also in the previous application, at low sampling frequencies (here $1$ Hz, which leads to $n = 10$ observations),} the COF estimator seems to be severely biased (\mikko{the} mean \mikko{of the} estimates of $\alpha$ \mikko{is} around $-0.475$). In this case\mikko{,} the CLT\mikko{-based test} starts rejecting $H_0$ at a\mikko{n unplausibly} high rate (around $17\%$) while the \mikko{bootstrap-based test is more} conservative \mikkel{with rejection rate around $6\%$}. As we \mikko{would} still expect \mikko{the null hypothesis} $\mikko{H_0:\alpha = -1/6}$ to \mikko{be actually true at} this sampling frequency, it is \mikko{reassuring} that the bootstrap\mikko{-based test} is so close to the nominal \mikko{rate} $5\%$ in this, \mikko{arguably} extreme\mikko{,} case. \mikko{However, this comes with the caveat that $n = 10$ observations might simply be too few to draw any definite conclusion on.}

\begin{figure}[!t] 
\centering 
\includegraphics[scale=0.81]{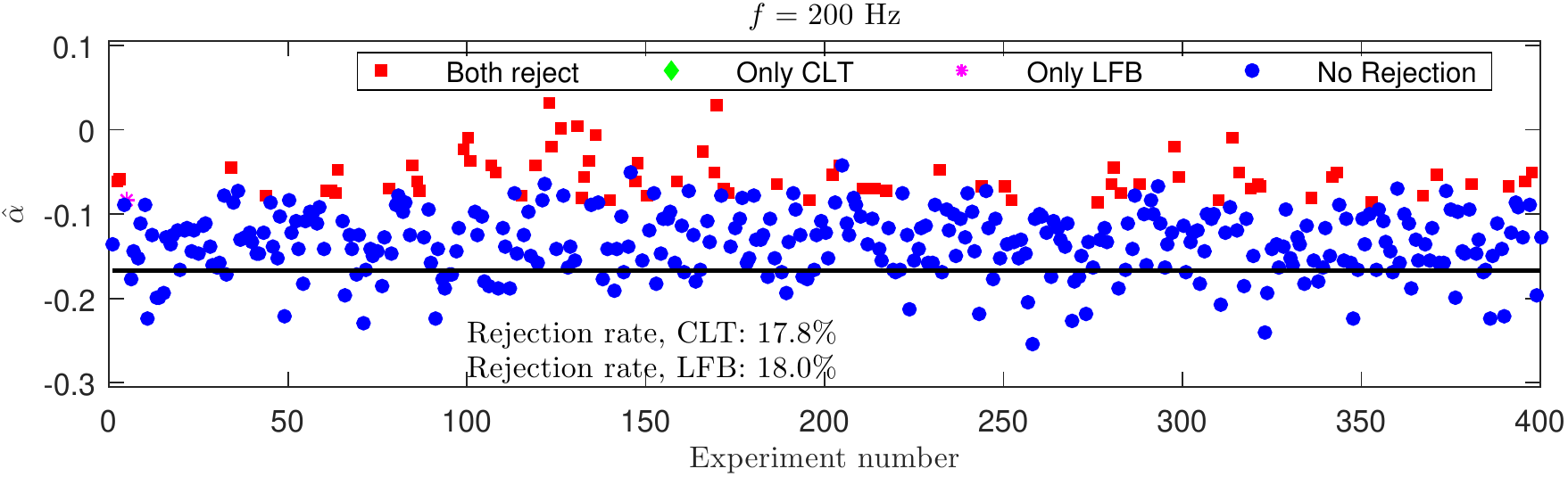} \\
\includegraphics[scale=0.81]{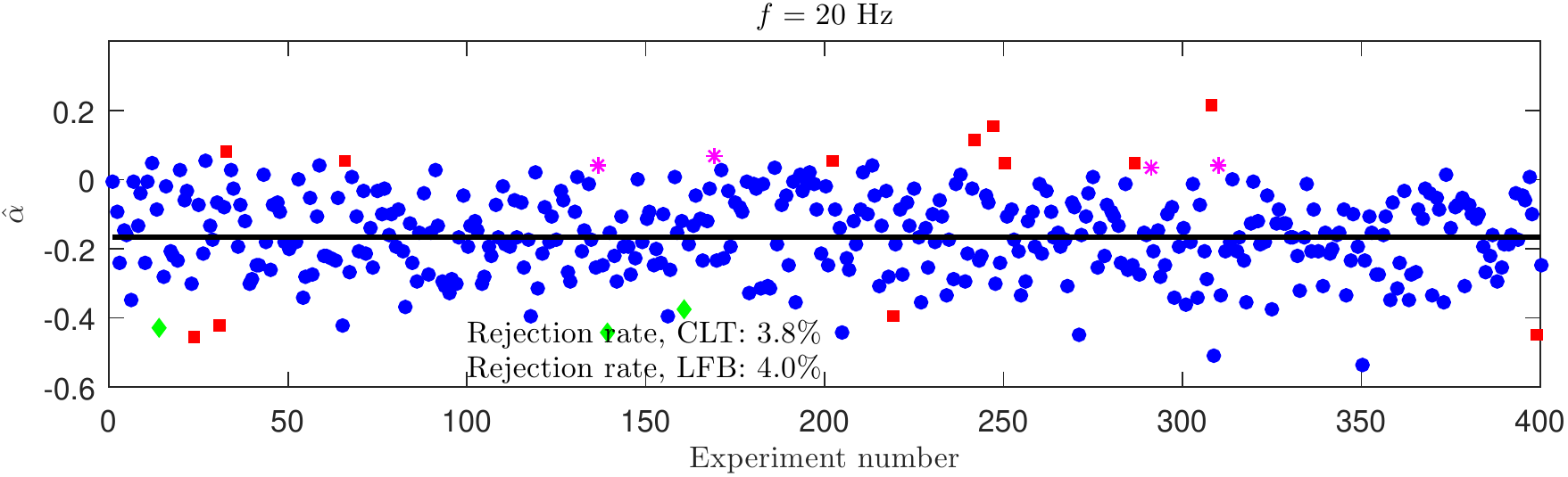} \\
\includegraphics[scale=0.81]{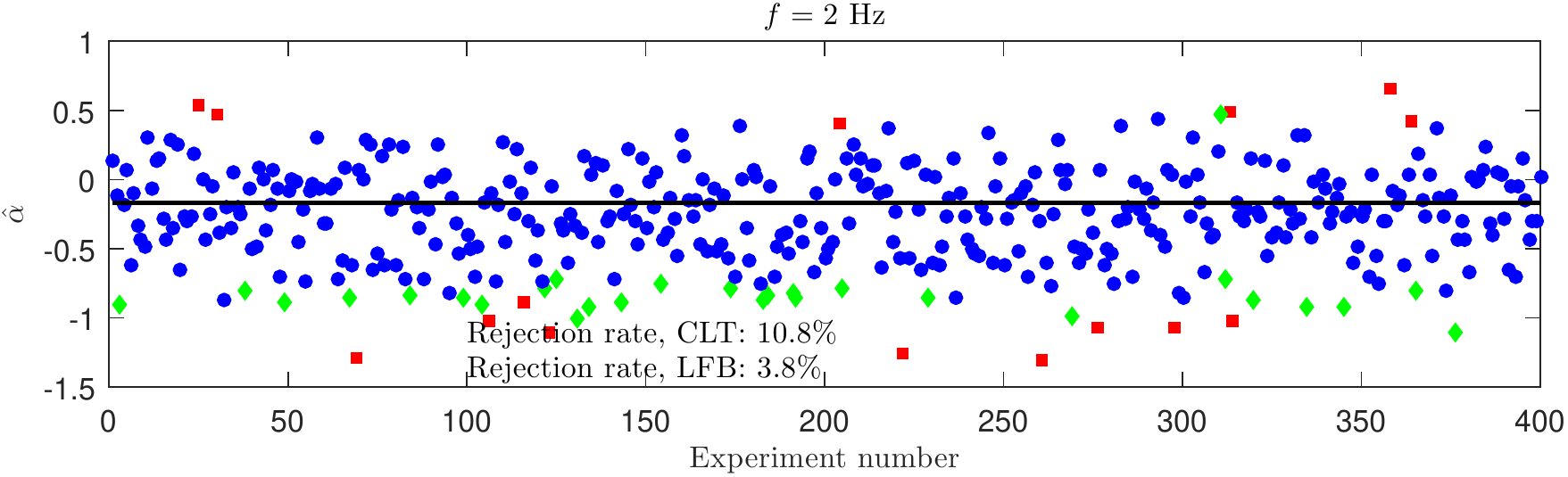} \\
\includegraphics[scale=0.81]{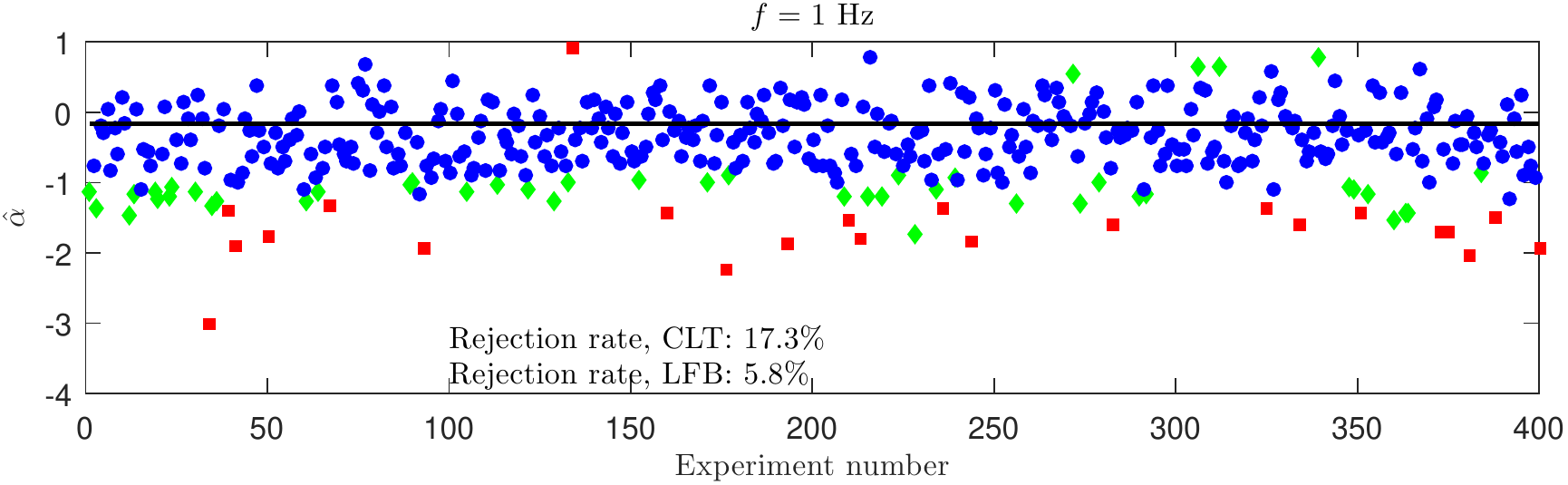}
\caption{\mikkel{Estimates of $\alpha$  and test of $H_0: \alpha = -1/6$ against $H_1: \alpha \neq -1/6$ from $400$  experiments using the turbulence data described in the text, see also \cite{dhruva00}. The data is sampled at frequency $f$. The plots depict the estimated value of $\alpha$ from a given experiment. The particular experiments where $H_0$ was rejected by both the CLT and LFB are shown by red squares; the experiments where only the CLT, but not the LFB, rejected are green diamonds; the experiments where only the LFB, but not the CLT, rejected are magenta asterisks; and the experiements where no method rejects $H_0$ are blue circles. We used $B = 999$ bootstrap replications and the black horizontal line indicates the null value $\alpha = -1/6$}.}
\label{fig:brook}
\end{figure}

\section{Conclusion}

\label{sec:concl} We have proposed a novel bootstrap method \mikko{of conducting} inference on the roughness index \mikko{$\alpha$} of a Brownian semistationary
process using the change-of-frequency estimator. \mikko{While our simulation study indicates that the performance of both the CLT- and bootstrap-based tests} is generally good, the bootstrap approach \mikko{improves the size properties of the test of $H_0 : \alpha = \alpha_0$ when the number of observations is moderate or small.}

As an application, we applied the method to test \mikko{for} $H_0: \alpha = 0$ \mikko{with a} time series of \mikko{intraday prices} of the E-mini \mikko{S\&P 500} futures contract and to test \mikko{for} $H_0: \alpha = -1/6$ with a time series of measurements of atmospheric turbulence. With both data sets, we observed what the simulation results \mikko{already} indicated: the CLT rejects the \mikko{respective} null hypotheses, that we expect to be true on theoretical grounds, too often when \mikko{the number of observations is limited}, while the local fractional bootstrap retains the correct size. We conclude that the local fractional bootstrap is a powerful alternative to the CLT when \mikko{drawing} inference on the roughness index $\alpha$, \mikko{and it appears to be essential at lower observation frequencies.} 

Finally, we note that \mikko{while} in this paper we \mikko{have} focused on $\BSS$ processes, \mikko{the local fractional bootstrap method should be applicable to other ``fractional'' processes such as} the fractional Brownian motion (fBm), fractional Ornstein-Uhlenbeck process, \mikko{and the like. We leave such extensions for future work.}

\newpage

{\small 
\bibliographystyle{chicago}
\bibliography{bootBSSbib-v04}

\begin{thebibliography}{}

\bibitem[\protect\citeauthoryear{Barndorff-Nielsen}{Barndorff-Nielsen}{2012}]{BN12}
Barndorff-Nielsen, O.~E. (2012).
\newblock Notes on the gamma kernel.
\newblock Thiele Centre Research Report, No. 03, May 2012.

\bibitem[\protect\citeauthoryear{Barndorff-Nielsen}{Barndorff-Nielsen}{2016}]{BN16a}
Barndorff-Nielsen, O.~E. (2016).
\newblock Assessing {Gamma} kernels and {BSS}/{LSS} processes.
\newblock {\em CREATES Working Paper 2016-09\/}.

\bibitem[\protect\citeauthoryear{Barndorff-Nielsen, Benth, and
  Veraart}{Barndorff-Nielsen et~al.}{2013}]{BNBV13}
Barndorff-Nielsen, O.~E., F.~E. Benth, and A.~E.~D. Veraart (2013).
\newblock Modelling energy spot prices by volatility modulated {L}\'evy-driven
  {Volterra} processes.
\newblock {\em Bernoulli\/}~{\em 19\/}(3), 803--845.

\bibitem[\protect\citeauthoryear{Barndorff-Nielsen, Corcuera, and
  Podolskij}{Barndorff-Nielsen et~al.}{2009}]{BNCP09}
Barndorff-Nielsen, O.~E., J.~M. Corcuera, and M.~Podolskij (2009).
\newblock Power variation for {Gaussian} processes with stationary increments.
\newblock {\em Stochastic Processes and their Applications\/}~{\em 119\/}(6),
  1845--1865.

\bibitem[\protect\citeauthoryear{Barndorff-Nielsen, Corcuera, and
  Podolskij}{Barndorff-Nielsen et~al.}{2011}]{BNCP11}
Barndorff-Nielsen, O.~E., J.~M. Corcuera, and M.~Podolskij (2011).
\newblock Multipower variation for {Brownian} semistationary processes.
\newblock {\em Bernoulli\/}~{\em 4\/}(17), 1159--1194.

\bibitem[\protect\citeauthoryear{Barndorff-Nielsen, Corcuera, and
  Podolskij}{Barndorff-Nielsen et~al.}{2013}]{BNCP12}
Barndorff-Nielsen, O.~E., J.~M. Corcuera, and M.~Podolskij (2013).
\newblock Limit theorems for functionals of higher order differences of
  {B}rownian semistationary processes.
\newblock In A.~N. Shiryaev, S.~R.~S. Varadhan, and E.~Presman (Eds.), {\em
  Prokhorov and Contemporary Probability Theory}, pp.\  69--96. Berlin:
  Springer.

\bibitem[\protect\citeauthoryear{Barndorff-Nielsen, Hansen, Lunde, and
  Shephard}{Barndorff-Nielsen et~al.}{2008}]{BNHLS08}
Barndorff-Nielsen, O.~E., P.~R. Hansen, A.~Lunde, and N.~Shephard (2008).
\newblock Designing realized kernels to measure the ex post variation of equity
  prices in the presence of noise.
\newblock {\em Econometrica\/}~{\em 76\/}(6), 1481--1536.

\bibitem[\protect\citeauthoryear{Barndorff-Nielsen, Pakkanen, and
  Schmiegel}{Barndorff-Nielsen et~al.}{2014}]{BNPS14}
Barndorff-Nielsen, O.~E., M.~S. Pakkanen, and J.~Schmiegel (2014).
\newblock Assessing relative volatility/intermittency/energy dissipation.
\newblock {\em Electronic Journal of Statistics\/}~{\em 8\/}(2), 1996--2021.

\bibitem[\protect\citeauthoryear{Barndorff-Nielsen and
  Schmiegel}{Barndorff-Nielsen and Schmiegel}{2007}]{BNSc07}
Barndorff-Nielsen, O.~E. and J.~Schmiegel (2007).
\newblock Ambit processes: with applications to turbulence and tumour growth.
\newblock In {\em Stochastic analysis and applications}, Volume~2 of {\em Abel
  Symposia}, pp.\  93--124. Berlin: Springer.

\bibitem[\protect\citeauthoryear{Barndorff-Nielsen and
  Schmiegel}{Barndorff-Nielsen and Schmiegel}{2009}]{BNSc09}
Barndorff-Nielsen, O.~E. and J.~Schmiegel (2009).
\newblock Brownian semistationary processes and volatility/intermittency.
\newblock In {\em Advanced financial modelling}, Volume~8 of {\em Radon Series
  on Computational and Applied Mathematics}, pp.\  1--25. Berlin: Walter de
  Gruyter.

\bibitem[\protect\citeauthoryear{Basse}{Basse}{2008}]{basse08}
Basse, A. (2008).
\newblock Gaussian moving averages and semimartingales.
\newblock {\em Electron. J. Prob.\/}~{\em 13\/}(39), 1140--1165.

\bibitem[\protect\citeauthoryear{Bennedsen}{Bennedsen}{2016}]{bennedsen16a}
Bennedsen, M. (2016).
\newblock Semiparametric bootstrap inference on the fractal index of a time
  series.
\newblock Working paper, available at: \url{https://arxiv.org/abs/1608.01895}.

\bibitem[\protect\citeauthoryear{Bennedsen}{Bennedsen}{2017}]{bennedsen15}
Bennedsen, M. (2017).
\newblock A rough multi-factor model of electricity spot prices.
\newblock {\em Energy Economics\/}~{\em 63}, 301--313.

\bibitem[\protect\citeauthoryear{Bennedsen, Hounyo, Lunde, and
  Pakkanen}{Bennedsen et~al.}{2017}]{BHLP16}
Bennedsen, M., U.~Hounyo, A.~Lunde, and M.~S. Pakkanen (2017).
\newblock Web appendix to the local fractional bootstrap.
\newblock Available at:
  \url{https://sites.google.com/site/mbennedsen/research}.

\bibitem[\protect\citeauthoryear{Bennedsen, Lunde, and Pakkanen}{Bennedsen
  et~al.}{2016}]{BLP16}
Bennedsen, M., A.~Lunde, and M.~S. Pakkanen (2016).
\newblock Decoupling the short- and long-term behavior of stochastic
  volatility.
\newblock Working paper, available at: \url{https://arxiv.org/abs/1610.00332}.

\bibitem[\protect\citeauthoryear{Bennedsen, Lunde, and Pakkanen}{Bennedsen
  et~al.}{2017}]{BLP15}
Bennedsen, M., A.~Lunde, and M.~S. Pakkanen (2017).
\newblock Hybrid scheme for {Brownian} semistationary processes.
\newblock Forthcoming in \emph{Finance and Stocastics}.

\bibitem[\protect\citeauthoryear{Breuer and Major}{Breuer and
  Major}{1983}]{BM83}
Breuer, P. and P.~Major (1983).
\newblock Central limit theorems for nonlinear functionals of gaussian fields.
\newblock {\em Journal of Multivariate Analysis\/}~{\em 13\/}(3), 425--441.

\bibitem[\protect\citeauthoryear{Corcuera, Hedevang, Pakkanen, and
  Podolskij}{Corcuera et~al.}{2013}]{CHPP13}
Corcuera, J.~M., E.~Hedevang, M.~S. Pakkanen, and M.~Podolskij (2013).
\newblock Asymptotic theory for {Brownian} semi-stationary processes with
  application to turbulence.
\newblock {\em Stochastic Processes and their Applications\/}~{\em 123\/}(7),
  2552--2574.

\bibitem[\protect\citeauthoryear{Davidson and MacKinnon}{Davidson and
  MacKinnon}{1999}]{DM99}
Davidson, R. and J.~G. MacKinnon (1999).
\newblock The size distortion of bootstrap tests.
\newblock {\em Econometric Theory\/}~{\em 15\/}(3), 361--376.

\bibitem[\protect\citeauthoryear{Delbaen and Schachermayer}{Delbaen and
  Schachermayer}{1994}]{DS94}
Delbaen, F. and W.~Schachermayer (1994).
\newblock A general version of the fundamental theorem of asset pricing.
\newblock {\em Mathematische Annalen\/}~{\em 300\/}(1), 463--520.

\bibitem[\protect\citeauthoryear{Dhruva}{Dhruva}{2000}]{dhruva00}
Dhruva, B.~R. (2000).
\newblock {\em An experimental study of high {Reynolds} number turbulence in
  the atmosphere}.
\newblock Ph.\ D. thesis, Yale University.

\bibitem[\protect\citeauthoryear{Gatheral, Jaisson, and Rosenbaum}{Gatheral
  et~al.}{2014}]{GJR14}
Gatheral, J., T.~Jaisson, and M.~Rosenbaum (2014).
\newblock Volatility is rough.
\newblock Working paper, available at: \url{http://arxiv.org/abs/1410.3394}.

\bibitem[\protect\citeauthoryear{Glasserman}{Glasserman}{2003}]{glasserman03}
Glasserman, P. (2003).
\newblock {\em Monte {Carlo} Methods in Financial Engineering}.
\newblock New York: Springer.

\bibitem[\protect\citeauthoryear{Gneiting, \v{S}ev\v{c}{\'{\i}}kov{\'a}, and
  Percival}{Gneiting et~al.}{2012}]{GSP12}
Gneiting, T., H.~\v{S}ev\v{c}{\'{\i}}kov{\'a}, and D.~B. Percival (2012).
\newblock Estimators of fractal dimension: assessing the roughness of time
  series and spatial data.
\newblock {\em Statistical Science\/}~{\em 27\/}(2), 247--277.

\bibitem[\protect\citeauthoryear{Gon\c{c}alves and Meddahi}{Gon\c{c}alves and
  Meddahi}{2009}]{GM09}
Gon\c{c}alves, S. and N.~Meddahi (2009).
\newblock Bootstrapping realized volatility.
\newblock {\em Econometrica\/}~{\em 77\/}(1), 283--306.

\bibitem[\protect\citeauthoryear{Huang and Tauchen}{Huang and
  Tauchen}{2005}]{HT05}
Huang, X. and G.~Tauchen (2005).
\newblock The relative contribution of jumps to total price variation.
\newblock {\em Journal of Financial Econometrics\/}~{\em 3\/}(4), 456--499.

\bibitem[\protect\citeauthoryear{Kolmogorov}{Kolmogorov}{1940}]{Kolmogorov1940}
Kolmogorov, A.~N. (1940).
\newblock Wienersche {S}piralen und einige andere interessante {K}urven im
  {H}ilbertschen {R}aum.
\newblock {\em Comptes Rendus de l'Acad\'emie des Sciences de l'URSS\/}~{\em
  26\/}(2), 115--118.

\bibitem[\protect\citeauthoryear{Kolmogorov}{Kolmogorov}{1941}]{kolmogorov41}
Kolmogorov, A.~N. (1941).
\newblock The local structure of turbulence in incompressible viscous fluid for
  very large {Reynolds} numbers.
\newblock {\em Doklady Akademii Nauk SSSR\/}~{\em 30\/}(4), 299--303.

\bibitem[\protect\citeauthoryear{Lang and Roueff}{Lang and
  Roueff}{2001}]{LR2001}
Lang, G. and F.~Roueff (2001).
\newblock Semi-parametric estimation of the {H}\"older exponent of a stationary
  {G}aussian process with minimax rates.
\newblock {\em Statistical Inference for Stochastic Processes\/}~{\em 4\/}(3),
  283--306.

\bibitem[\protect\citeauthoryear{Liu}{Liu}{1988}]{liu88}
Liu, R.~Y. (1988).
\newblock Bootstrap procedure under some non-i.i.d. models.
\newblock {\em Annals of Statistics\/}~{\em 16\/}(4), 1696--1708.

\bibitem[\protect\citeauthoryear{Mandelbrot and {Van Ness}}{Mandelbrot and {Van
  Ness}}{1968}]{MVN68}
Mandelbrot, B.~B. and J.~W. {Van Ness} (1968).
\newblock Fractional {Brownian} motions, fractional noises and applications.
\newblock {\em SIAM Review\/}~{\em 10\/}(4), 422--437.

\bibitem[\protect\citeauthoryear{M{\'a}rquez and Schmiegel}{M{\'a}rquez and
  Schmiegel}{2016}]{MS16}
M{\'a}rquez, J.~U. and J.~Schmiegel (2016).
\newblock Modelling turbulent time series by {BSS}-processes.
\newblock In M.~Podolskij, R.~Stelzer, S.~Thorbj{\o}rnsen, and A.~E.~D. Veraart
  (Eds.), {\em The Fascination of Probability, Statistics and their
  Applications: In Honour of Ole E. Barndorff-Nielsen}, pp.\  29--52. Cham:
  Springer.

\bibitem[\protect\citeauthoryear{Nourdin, Peccati, and Podolskij}{Nourdin
  et~al.}{2011}]{NPP11}
Nourdin, I., G.~Peccati, and M.~Podolskij (2011).
\newblock Quantitative {Breuer}-{Major} theorems.
\newblock {\em Stochastic Processes and their Applications\/}~{\em 121\/}(4),
  793--812.

\bibitem[\protect\citeauthoryear{Taylor}{Taylor}{1938}]{Taylor1938}
Taylor, G.~I. (1938).
\newblock The spectrum of turbulence.
\newblock {\em Proceedings of the Royal Society of London. Series A,
  Mathematical and Physical Sciences\/}~{\em 164\/}(919), 476--490.

\bibitem[\protect\citeauthoryear{Wu}{Wu}{1986}]{wu86}
Wu, C. F.~J. (1986).
\newblock Jackknife, bootstrap and other resampling methods in regression
  analysis.
\newblock {\em Annals of Statistics\/}~{\em 14\/}(4), 1261--1350.

\end{thebibliography}
}

\appendix

\section{Simulation design}\label{sec:sim}

In our Monte Carlo study \mikko{presented} above we \mikko{have simulated $n+1\in \mathbb{N}$ equidistant observations $X_0,X_{1/n},X_{2/n},\ldots,X_1$} of
the $\BSS$ process
\begin{equation}
X_{t}=\int_{-\infty }^{t}g(t-s)\sigma_sdW_{s}  \label{eq:Xbss}
\end{equation}%
on the time interval $[0,1]$\mikko{.} Recall that we take $g(x)=x^{\alpha }e^{-\lambda x}$ where $%
\lambda >0$ and $\alpha \in \left( -\frac{1}{2},\frac{1}{2}\right) $.
Simulation of $X$ is \mikko{not straightforward} as the process \mikko{is typically neither Gaussian nor Markovian, which rules out both exact and recursive simulation schemes.} \mikko{However, as shown in \cite{BLP15}, the process $X$ can be simulated efficiently and accurately using the so-called \emph{hybrid scheme}, which is based on approximating $X_t$ by a Riemann sum plus Wiener integrals of a power function that mimicks the steep behavior of $g$ at zero.
In particular, 
the hybrid scheme improves significantly simulation accuracy compared to any approximation using
merely Riemann sums.}

\mikko{To simulate the observations $X_0,X_{1/n},X_{2/n},\ldots,X_1$, the hybrid scheme approximates $X_{i/n}$, $%
i=0,1,\ldots,n$,} by
\begin{align*}
\mikko{X^n_{i/n} := \check{X}^n_{i/n} + \hat{X}^n_{i/n},}
\end{align*}
where
\begin{align}
\mikko{\check{X}^n_t} & := \mikko{\sum_{k=1}^\kappa L_g\bigg( \frac{k}{n} \bigg) \sigma_{t-k/n} \int_{t-\frac{k}{n}}^{t-\frac{k}{n}+\frac{1}{n}}(t-s)^\alpha dW_s,} \label{eq:checkX} \\
\mikko{\hat{X}^n_t} & := \mikko{\sum_{k=\kappa+1}^{N_n} g\bigg(\frac{b_k^*}{n}\bigg)\sigma_{t- k/n} ( W_{t-k/n+1/n} - W_{
t-k/n}). } \label{eq:hatX}
\end{align}
The number $N_n := \lfloor n^{1+\delta} \rfloor$, for some $\delta >0$,
determines the truncation "towards minus infinity", while $\kappa \geq 0$
denotes the number of terms that are simulated directly \mikko{via} Wiener
integrals, cf\mikko{.} \eqref{eq:checkX}. As shown in \cite{BLP15}, $\kappa = 1$ suffices when $%
\alpha < 0$, but we need $\kappa = 3$ when $\alpha$ is close to $\frac{1}{2}$%
. In the simulations, we therefore choose $\kappa = 1$ when $\alpha < 0$ and
$\kappa =3$ when $\alpha > 0$. We also let $\delta = 0.5$. The numbers
\begin{align*}
b_k^* = \left( \frac{k^{\alpha+1} - (k-1)^{\alpha+1}}{\alpha+1}%
\right)^{1/\alpha},
\end{align*}
$k=1\ldots, N_n$, are \mikko{the} optimal points\footnote{%
\mikko{In the sense of asymptotic mean-square error.}} \mikko{to evaluate $%
g $ at}; see Proposition 2.2 of \cite{BLP15}. We refer to \cite{BLP15} for implementation of the algorithm used to simulate %
\eqref{eq:checkX} and \eqref{eq:hatX} exactly while simulateneously
simulating \mikko{$\sigma_{i/n - k/n}$}, $i,k = 0, 1,
\ldots,$ which may be correlated with $W.$

For the stochastic volatility process $\sigma = \mikko{(}\sigma_t\mikko{)}_{t \in \mathbb{R%
}}$, we consider three different specifications: (i) constant volatility,
labeled NoSV; (ii) one-factor stochastic volatility, labeled SV1F; and (iii)
two-factor stochastic volatility, labeled SV2F. For the NoSV model we take
for $t\in \mathbb{R}$,
\begin{eqnarray*}
\sigma_t &=& 1,
\end{eqnarray*}
while we in the SV1F model take, following \cite{BNHLS08},
\begin{eqnarray*}
\sigma_t &=& \exp(\beta_0 + \beta_1 \tau_t), \\
d\tau_t &=& \xi \tau_t dt + dB_t, \\
\mikko{d[W,B]_t} &=& \rho dt,
\end{eqnarray*}
where $B$ is a standard Brownian motion and $\beta_1 =0.125,$ $\xi = -0.025,$
$\beta_0 = \frac{\beta_1^2}{2\xi} = -0.3125$ and $\rho = -0.3.$ Lastly, for
the SV2F model we take, following \cite{HT05} and
 \cite{BNHLS08},
\begin{eqnarray*}
\sigma_t &=& \mathnormal{s}\textnormal{-}\exp(\beta_0 + \beta_1\tau_{1t} + \beta_2
\tau_{2t}), \\
d\tau_{1t} &=& \xi_1 \tau_{1t} dt + d\mikko{B^1_{t}}, \\
d\tau_{2t} &=& \xi_2 \tau_{2t} dt + (1+\phi\tau_{2t})d\mikko{B^2_{t}}, \\
\mikko{d[W,B^1]_t} &=& \rho_1dt, \\
\mikko{d[W,B^2]_t} &=& \rho_2dt,
\end{eqnarray*}
where \mikko{$B^1$}, \mikko{$B^2$} are standard Brownian motions and the function $%
\mathnormal{s}\textnormal{-}\exp$ is given by
\begin{equation*}
\mathnormal{s}\textnormal{-}\exp (x) =\left\{
\begin{array}{ll}
\exp(x), & \quad x \leq \log(1.5), \\
\frac{3}{2}\sqrt{1-\log(1.5) + x^2/\log(1.5)}, & \quad x > \log(1.5)%
\end{array}%
\right.
\end{equation*}%
and the parameters are set to $(\beta_0, \beta_1, \beta_2)^T
=(-1.20,0.040,1.50)^T,$ $(\xi_1, \xi_2)^T = (-0.00137,-1.386)^T,$ $\phi =
0.250$ and $\rho_1 = \rho_2 = -0.30.$

We \mikko{note that} in the NoSV case the process $X$ is Gaussian and can thus be
simulated exactly using a Cholesky decomposition of its variance-covariance
matrix, which is what we do in our simulations. The stochastic processes of
SV1F and SV2F can be simulated exactly using methods in \cite{glasserman03},
see also the Simulation Appendix to \cite{BNHLS08}.

\section{Expressions for $\Lambda_p$, $\lambda_{p,n}^{i,j}$, and $\mu(p,\upsilon)_t^n$}\label{app:derive}
In both the \mikko{ordinary} CLT \eqref{CLT-Alpha-Test} as well as the bootstrap CLT there are various terms that are necessary to derive when implementing the methods. In particular, the \mikko{ordinary} CLT requires calculation of $\Lambda_p = \{ \lambda_p^{i,j} \}_{1\leq i,j\leq 2}$ while the bootstrap CLT requires calculation of $\{\lambda_{p,n}^{i,j} \}_{1\leq i,j\leq 2}$ as well as $\mu(p,\upsilon)_t^n =\mathbb{E}^{\ast }\left( V\left( B^{H};p,\upsilon
\right)^n_{t}\right)$ for $\upsilon = 1,2$, see Remark \ref{rem:bootFeasible}. Although the calculations involved are \mikko{quite straightforward,} they are tedious. For \mikko{the convenience of the reader,} we supply the expressions for these terms here. In what follows we denote by $n = \lfloor t/\Delta_n \rfloor$ the total number of observations.

Recall the specifications
\begin{eqnarray*}
\lambda _{p,n}^{11} &=&\Delta
_{n}^{-1}var\left( \bar{V}\left( B^{H};p,1\right)^n_{1}\right), \\
\lambda _{p,n}^{22}&=&\Delta
_{n}^{-1}var\left( \bar{V}\left( B^{H};p,2\right)^n_{1}\right) , \\
\lambda _{p,n}^{12} &=&\Delta
_{n}^{-1}cov\left( \bar{V}\left( B^{H};p,1\right)^n_{1},\bar{V}%
\left( B^{H};p,2\right)^n _{1}\right) ,
\end{eqnarray*}%
and $\lambda_p^{i,j} = \lim_{n\rightarrow \infty} \lambda _{p,n}^{i,j}$. Analytical expressions are only known for $p=2$, which is arguably the most relevant in empirical applications as it corresponds to using squared increments when calculating power variations. We therefore only consider $p=2$ here.

Let $\rho^H_{\upsilon_1,\upsilon_2}$ be the correlation function between the second order increment\mikko{s} of \mikko{the} fractional Brownian motion with Hurst index $H$, calculated at lag $\upsilon_1$, and the second order increment calculated at lag $\upsilon_2$. In other words
\begin{eqnarray*}
\rho^H_{\upsilon_1,\upsilon_2}(h) &:=& Corr \left(B^H_{i+h} - 2B^H_{i+h-\upsilon_1} + B^H_{i+h-2\upsilon_1}, B^H_{i} - 2B^H_{i-\upsilon_2} + B^H_{i-2\upsilon_2}\right), \quad h \in \mathbb{Z}.
\end{eqnarray*}
We will need the combinations $(\upsilon_1,\upsilon_2) = (1,1), (2,2), (1,2)$ and we give them for reference:
\begin{eqnarray*}
\rho^H_{1,1}(h) &:=& \frac{1}{2(4-2^{2H})}\left( -|h-2|^{2H} + 4 |h-1|^{2H} - 6|h|^{2H} + 4|h+1|^{2H} - |h+2|^{2H} \right), \\
\rho^H_{2,2}(h) &:=& \frac{1}{2\left(4\cdot 2^{2H}-4^{2H}\right)}\left( -|h-4|^{2H} + 4 |h-2|^{2H} - 6|h|^{2H} + 4|h+2|^{2H} - |h+4|^{2H} \right), \\
\rho^H_{1,2}(h) &:=& \frac{-|h-2|^{2H} +2|h-1|^{2H}+ |h|^{2H} - 4|h+1|^{2H} + |h+2|^{2H} +2|h+3|^{2H} - |h+4|^{2H} }{2\sqrt{4-2^{2H}}\sqrt{4\cdot 2^{2H}-4^{2H}}}.
\end{eqnarray*}

Brute force calculations will yield
\begin{eqnarray*}
\lambda _{2,n}^{11} &=& 2 \Delta_n^{4H} \sum_{i=2}^n \sum_{j=2}^n \rho^H_{1,1}(i-j)^2 , \\
\lambda _{2,n}^{22} &=& 2\Delta_n^{4H}\sum_{i=4}^n \sum_{j=4}^n  \rho^H_{2,2}(i-j)^2, \\
\lambda _{2,n}^{12}&=& \lambda _{2,n}^{21} =  2\Delta_n^{4H} \sum_{i=2}^n \sum_{j=4}^n\rho^H_{1,2}(i-j)^2.
\end{eqnarray*}%

In the feasible implementation in Remark \ref{rem:bootFeasible} we will actually need the unnormalized variants of $\lambda_ {2,n}^{ij}$ which are
\begin{eqnarray*}
Var\left( V\left( B^{H};p,1\right)^n_{1}\right) &=& \Delta_{n} \lambda _{2,n}^{11} \cdot (4-2^{2H})^2  , \\
Var\left( V\left( B^{H};p,2\right)^n_{1}\right) &=& \Delta_{n} \lambda _{2,n}^{22} \cdot (4\cdot2^{2H} - 4^{2H})^2, \\
Cov\left( V\left( B^{H};p,1\right)^n_{1},V\left( B^{H};p,2\right)^n _{1}\right) &=&  \Delta_{n} \lambda _{2,n}^{12} \cdot (4-2^{2H})(4\cdot2^{2H} - 4^{2H}).
\end{eqnarray*}%

To arrive at expressions for $\lambda_p^{ij}$ we can either take limits in the above or use the theory in \cite{NPP11} \citep[see e.g.][]{BNCP12} to get
\begin{eqnarray*}
\lambda _{2}^{11} &=& 2 + 4\sum_{h=1}^{\infty} \rho^H_{1,1}(h)^2, \\
\lambda _{2}^{22}&=&2 + 2^{-4H+2} \sum_{h=1}^{\infty} \big[ \rho^H_{1,1}(h-2) + 4\rho^H_{1,1}(h-1) + 6\rho^H_{1,1}(h) + 4\rho^H_{1,1}(h+1) + \rho^H_{1,1}(h+2) \big]^2, \\
\lambda _{2}^{12} &=& \lambda _{2}^{21}  = 2^{3-2H}\big(\rho^H_{1,1}(1) +1\big)^2 + 2^{2-2H}\sum_{h=0}^{\infty} \big[ \rho^H_{1,1}(h) + 2\rho^H_{1,1}(h+1) + \rho^H_{1,1}(h+2) \big]^2.
\end{eqnarray*}%

\mikko{Finally}, we need to calculate $\mu(2,1)_t^n$ and $\mu(2,2)_t^n$. \mikko{Straightforward} calculations yield
\begin{eqnarray*}
\mu(2,1)_t^n &=& (n-1) \Delta_n^{2H}\big(4-2^{2H}\big), \\
\mu(2,2)_t^n &=& (n-3)\Delta_n^{2H}\big(4\cdot 2^{2H} - 4^{2H}\big).
\end{eqnarray*}%

\section{Proofs}\label{sec:proofs}

\begin{proof}[Proof of Proposition \ref{prop:a0}]
The proofs of the statements follow the ones referenced in \cite{CHPP13} almost verbatim, which in turn relies on results from \cite{BNCP12}. We note that in the case of $\alpha = 0$ the increments of the $\BSS$ process are asymptotically uncorrelated. Indeed, we have by Assumption 2'(i) \citep[cf. equation (2.6) in][]{CHPP13}
\begin{eqnarray*}
r_{n}(j) :=  \textnormal{corr}\big(G_{(j+1)\Delta_n} - 2G_{j\Delta_n} + G_{(j-1)\Delta_n}, G_{\Delta_n} - 2G_0 + G_{-\Delta_n}\big)   \stackrel{n\rightarrow \infty}{\longrightarrow} \rho_2(j), \quad j\geq 0,
\end{eqnarray*}
where $\rho_2(j)$  is the correlation function of the second order increments of a Brownian motion. Therefore, 
\begin{eqnarray*}
\rho_2(j) = 0, \quad j \geq 2.
\end{eqnarray*}
This uncorrelatedness simplifies matters, for instance when we need to calculate $\Lambda_p$  \citep[see pages 85-86 in][]{BNCP12}. The proof of Proposition \ref{prop:a0} has two main parts, see the similar proofs in \cite{BNCP11,BNCP12}. First, we need to show the existence of a sequence $r(j)$, such that
\begin{eqnarray}\label{eq:rseq}
|r_{n}(j)| \leq Cr(j), \quad \sum_{j=1}^{\infty} r(j)^2 < \infty, 
\end{eqnarray}
where $C>0$, see page 75 in \cite{BNCP12} for the case of $\alpha \neq 0$. Given that  \citep[e.g.][]{BNCP12}
\begin{eqnarray*}
r_{n}(j) = \frac{- R\left(\frac{j+2}{n}\right) + 4R\left(\frac{j+1}{n}\right) - 6R\left(\frac{j}{n}\right) + 4R\left(\frac{j-1}{n}\right) - R\left(\frac{j-2}{n}\right) }{4R\left(\frac{1}{n}\right)- R\left(\frac{2}{n}\right)},
\end{eqnarray*}
it is not difficult to show, using Assumptions 2'(i)-(iii) and the approach from the proof of Lemma 1 in  \cite{BNCP09}, that the sequence
\begin{eqnarray*}
r(j) = (j-1)^{-\beta}, \quad j \geq 2,
\end{eqnarray*}
will suffice as the sequence \mikko{in} \eqref{eq:rseq}, where $\beta$ is the parameter from Assumption 2'(ii). We now turn to the second part of the proof. Define
\begin{align*}
\pi^n(A) := \frac{\int_A \left(g(x+2\Delta_n) - 2g(x+\Delta_n) + g(x)\right)^2dx}{\int_0^{\infty} \left(g(x+2\Delta_n) - 2g(x+\Delta_n) + g(x)\right)^2dx}, \quad A \in \mathcal{B}\left(\mathbb{R}\right).
\end{align*}
The \mikko{only remaining} thing to show \mikko{is} to ensure that the limit theorems apply for stochastic $\sigma$, is that for all $\epsilon >0$ we have $\pi^n((\epsilon,\infty)) \rightarrow 0$ as $n \rightarrow \infty$. Using Assumption 1' (ii) and arguments as the ones in  \cite{BNCP12} page 74, we easily deduce this property. This concludes the proof.
\end{proof}

\begin{proof}[Proof of Lemma \ref{LemmaBoot-New-moments}]
(i) Given (\ref{one}), (\ref{Boot-New-Power-Var}), and (\ref%
{Boot-New-Power-Var-Bar}) the result follows. In particular, we have that
\begin{eqnarray*}
\mathbb{E}^{\ast }\left( \bar{V}^{\ast }\left( X,B^{H};p,\upsilon \right)^n _{t}\right) &=&\frac{\left\vert \widehat{\sigma }(p,\upsilon)_t^n\right\vert ^{p}}{\overline{\mu }(p,\upsilon )_t^n}%
\mathbb{E}^{\ast }\left( \bar{V}\left( B^{H};p,\upsilon \right)^n_{t}\right) \\
&=&\left\vert \widehat{\sigma }(p,\upsilon)_t^n\right\vert ^{p}
\end{eqnarray*}

(ii) Given (\ref{one}), (\ref{Boot-New-Power-Var}), and (\ref%
{Boot-New-Power-Var-Bar}), we can write
\begin{eqnarray*}
&&Var^{\ast }\left( \Delta _{n}^{-1/2}\bar{V}^{\ast }\left(
X,B^{H};p,1\right)^n_{t}\right) \\
&=&\Delta _{n}^{-1}\left( \frac{\left\vert \widehat{\sigma }(p,1)_t^n\right\vert ^{p}}{\overline{\mu }(p,1)_t^n}\right)
^{2}Var^{\ast }\left( \bar{V}\left( B^{H};p,1\right)^n_{t}\right) .
\end{eqnarray*}

(iii) Follows similarly as the proof of Lemma \ref{LemmaBoot-New-moments} part
(ii). (iv) Given (\ref{Boot-New-Power-Var-Bar}), we can write
\begin{eqnarray*}
&&Cov^{\ast }\left( \Delta _{n}^{-1/2}\bar{V}^{\ast }\left(
X,B^{H};p,1\right)^n_{t},\Delta _{n}^{-1/2}\bar{V}^{\ast
}\left( X,B^{H};p,2\right)^n_{t}\right) \\
&=&\Delta _{n}^{-1}\left( \frac{\left\vert \widehat{\sigma }(p,1)_t^n\right\vert ^{p}}{\overline{\mu }(p,1)_t^n}\right) \left(
\frac{\left\vert \widehat{\sigma }(p,2)_t^n\right\vert ^{p}}{\overline{%
\mu }(p,2)_t^n}\right) Cov\left( \bar{V}\left( B^{H};p,1\right)^n_{t},\bar{V}\left( B^{H};p,2\right)^n_{t}\right) .
\end{eqnarray*}

(iv) This result follows immediately given parts (ii), (iii), and (iv) of Lemma \ref%
{LemmaBoot-New-moments}, the assumed condition $\left\vert\widehat{\sigma }(p,\upsilon)_t^n\right\vert ^{2p}\overset{u.c.p.}{\rightarrow }%
\int_{0}^{1}\left\vert \sigma _{s}\right\vert ^{2p}ds,$ and the definition
of $\widetilde{\Lambda }_{p,t}^n.$
\end{proof}

\begin{proof}[Proof of Theorem \ref{TheorJointCLT-a}]
\mikko{Note} that we
can write%
\begin{equation*}
\widehat{\mathbf{S}}_{n}^{\ast }=\widehat{A}_{n}^{\ast }\mathbf{S}%
_{n}^{\ast },
\end{equation*}%
where $\mathbf{S}_{n}^{\ast }$ is given by%
\begin{equation*}
\mathbf{S}_{n}^{\ast }=\left( \Sigma^{\ast }\left( X,B^{H};p\right)^n_{t}\right) ^{-1/2}\left( \Delta _{n}\right) ^{-1/2}\left(
\begin{array}{c}
\bar{V}^{\ast }\left( X,B^{H};p,1\right)^n_{t}-\mathbb{E}^{\ast
}\left( \bar{V}^{\ast }\left( X,B^{H};p,1\right)^n_{t}\right)
\\
\bar{V}^{\ast }\left( X,B^{H};p,2\right)^n_{t}-\mathbb{E}^{\ast
}\left( \bar{V}^{\ast }\left( X,B^{H};p,2\right)^n_{t}\right)%
\end{array}%
\right) ,
\end{equation*}%
and
\begin{equation*}
\widehat{A}_{n}^{\ast }=\left( \widehat{\Sigma }^{\ast }\left(
X,B^{H};p\right)^n_{t}\right) ^{-1/2}\left( \Sigma^{\ast
}\left( X,B^{H};p\right)^n_{t}\right) ^{1/2}.
\end{equation*}%
Hence, to obtain the desired result of $\widehat{\mathbf{S}}_{n}^{\ast
}, $ we may proceed in two steps:

\begin{description}
\item[Step 1.] Show that $\mathbf{S}_{n}^{\ast }\overset{d^{\ast }}{%
\rightarrow }N\left( 0,I_{2}\right) .$

\item[Step 2.] Show that $\widehat{A}_{n}^{\ast }\overset{\mathbb{P}^{\ast
}}{\rightarrow }I_{2}.$
\end{description}

For Step 1, note that we can write $\mathbf{S}_{n}^{\ast }$ as follows%
\begin{equation*}
\mathbf{S}_{n}^{\ast }=\left( \Sigma^{\ast }\left( X,B^{H};p\right)^n_{t}\right) ^{-1/2}D\cdot \mathbf{T}_{n}
\end{equation*}%
where%
\begin{equation*}
D=\left(
\begin{array}{cc}
\frac{\left\vert\widehat{\sigma }(p,1)_t^n\right\vert ^{p}}{\overline{%
\mu }(p,1)_t^n} & 0 \\
0 & \frac{\left\vert\widehat{\sigma }(p,2)_t^n\right\vert ^{p}}{%
\overline{\mu }(p,2)_t^n}%
\end{array}%
\right) ,
\end{equation*}%
and%
\begin{equation*}
\mathbf{T}_{n}=\left( \Delta _{n}\right) ^{-1/2}\left(
\begin{array}{c}
\bar{V}\left( B^{H};p,1\right)^n_{t}-\mathbb{E}\left( \bar{V}%
\left( B^{H};p,1\right)^n_{t}\right) \\
\bar{V}\left( B^{H};p,2\right)^n_{t}-\mathbb{E}\left( \bar{V}%
\left( B^{H};p,2\right)^n_{t}\right)%
\end{array}%
\right) .
\end{equation*}%
Under our assumptions, we have that \citep[Theorem 1]{BM83}
\begin{equation*}
\mathbf{T}_{n}\overset{d}{\rightarrow }N\left( 0,\Lambda _{p}\right) .
\end{equation*}%
Thus, results in Step 1 will follow if we can show that%
\begin{equation*}
\left( \Sigma^{\ast }\left( X,B^{H};p,\right)^n_{t}\right)
^{-1/2}D=\left( D^{-1}\left( \Sigma^{\ast }\left( X,B^{H};p\right)^n_{t}\right) ^{1/2}\right) ^{-1}\overset{\mathbb{P}^{\ast }}{%
\rightarrow }\Lambda _{p}^{-1/2}.
\end{equation*}%
To this end, note that we have
\begin{equation*}
D^{-1}\left( \Sigma^{\ast }\left( X,B^{H};p\right)^n_{t}\right) ^{1/2}=\left(
\begin{array}{cc}
\sqrt{\lambda _{p,n}^{11}} & 0 \\
\frac{\lambda _{p,n}^{12}}{\sqrt{\lambda _{p,n}^{11}}} & \sqrt{\lambda
_{p,n}^{22}-\frac{\left( \lambda _{p,n}^{12}\right) ^{2}}{\lambda _{p,n}^{11}%
}}%
\end{array}%
\right) \equiv \Theta _{p,n},
\end{equation*}%
where we use%
\begin{equation*}
D^{-1}\mathbf{=}\left(
\begin{array}{cc}
\frac{\overline{\mu }(p,1)_t^n}{\left\vert \widehat{\sigma }(p,1)_t^n\right\vert ^{p}} & 0 \\
0 & \frac{\overline{\mu }(p,2)_t^n}{\left\vert \widehat{\sigma }(p,2)_t^n\right\vert ^{p}}%
\end{array}%
\right) ,
\end{equation*}%
and%
\begin{eqnarray*}
&&\left( \Sigma^{\ast }\left( X,B^{H};p\right)^n_{t}\right) ^{1/2} \\
&=&\left(
\begin{array}{cc}
\sqrt{\left( \overline{\mu }(p,1)_t^n\right) ^{-2}\lambda
_{p,n}^{11}\left\vert\widehat{\sigma }(p,1)_t^n\right\vert ^{2p}} & 0 \\
\frac{\left( \overline{\mu }(p,2)_t^n\right) ^{-1}\lambda
_{p,n}^{12}\left\vert \widehat{\sigma }(p,2)_t^n\right\vert ^{p}}{\sqrt{%
\lambda _{p,n}^{11}}} & \sqrt{\left( \overline{\mu }(p,2)_t^n\right)
^{-2}\lambda _{p,n}^{22}\left\vert\widehat{\sigma }(p,2)_t^n\right\vert
^{2p}-\frac{\left( \overline{\mu }(p,2)_t^n\right) ^{-2}\left( \lambda
_{p,n}^{12}\right) ^{2}\left\vert \widehat{\sigma }(p,2)_t^n\right\vert
^{2p}}{\lambda _{p,n}^{11}}}%
\end{array}%
\right) .
\end{eqnarray*}%
The result follows since
\begin{equation*}
\Theta _{p,n}\Theta _{p,n}^{T}=\Lambda _{p,n}=\left(
\begin{array}{cc}
\lambda _{p,n}^{11} & \lambda _{p,n}^{12} \\
\lambda _{p,n}^{12} & \lambda _{p,n}^{22}%
\end{array}%
\right) \rightarrow \Lambda _{p}.
\end{equation*}%
For Step 2, it suffices to show that%
\begin{eqnarray*}
\left( \widehat{\Sigma }^{\ast }\left( X,B^{H};p\right)^n_{t}\right) ^{-1}\left( \Sigma^{\ast }\left( X,B^{H};p\right)^n_{t}\right)=\left( \left( \Sigma^{\ast }\left( X,B^{H};p\right)^n_{t}\right) ^{-1}\left( \widehat{\Sigma }^{\ast }\left(
X,B^{H};p\right)^n_{t}\right) \right) ^{-1}\overset{\mathbb{P}%
^{\ast }}{\rightarrow }I_{2}.
\end{eqnarray*}%
We here utilize the
fact that convergence in $L_{1}$ implies convergence in probability and that
all elements of the sum in $\widehat{\Sigma }_{i,j}^{\ast }$ are non-negative (where $%
\widehat{\Sigma }_{i,j}^{\ast }$ is the $\left( i,j\right) $-th element of
the matrix $\widehat{\Sigma }^{\ast }=\left( \widehat{\Sigma }%
_{i,j}^{\ast }\right) _{1\leq i,j\leq 2}$). In particular, given (\ref%
{Sigma-hat-1-star}) and (\ref{Boot-New-Assymp-Var-hat}), for $1\leq i,j\leq
2$, we have%
\begin{equation*}
\mathbb{E}^{\ast }\left\vert \widehat{\Sigma }_{i,j}^{\ast }\right\vert =%
\mathbb{E}^{\ast }\left( \widehat{\Sigma }_{i,j}^{\ast }\right) =\Sigma
_{i,j}^{\ast }.
\end{equation*}%
Thus, we
deduce that
\begin{equation*}
\widehat{\Sigma }^{\ast }\left( X,B^{H};p\right)^n_{t}-\Sigma^{\ast }\left( X,B^{H};p\right)^n_{t}\overset{%
\mathbb{P}^{\ast }}{\rightarrow }0.
\end{equation*}%
This concludes the proof of Step 2 and \mikko{also that} of Theorem \ref{TheorJointCLT-a}.
\end{proof}

\begin{proof}[Proof of Theorem \ref{Theor-Alpha-hat-boot-a}]
Given Theorem \ref{TheorJointCLT-a}, \mikko{the} result follows from an application of the delta method.
\end{proof}

\end{document}